\newtheorem{theorem}{Theorem}[section]
\newtheorem{lemma}[theorem]{Lemma}
\newtheorem{proposition}[theorem]{Proposition}
\newtheorem{corollary}[theorem]{Corollary}
\theoremstyle{definition}
\newcommand{\SO}{{\mathrm{SO}}}
\newcommand{\Integral}{{\mathbb{Z}}}
\begin{document}
\sloppy

\title{Virtual 1-domination of 3-manifolds}

    %Information for first author
\author{Yi Liu}
\address{Beijing International Center of Mathematical Research, Peking University, Beijing 100871 China}
\email{liuyi@math.pku.edu.cn}
%    Information for second author
\author{Hongbin Sun}
\address{Department of Mathematics, UC Berkeley, CA 94720, USA}
\email{hongbins@math.berkeley.edu}

%    General info

\subjclass[2010]{57M10, 57M50}
\thanks{The first author is supported by the Recruitment Program of Global Youth Experts of China.
The second author is partially supported by Grant No.~DMS-1510383 of the National Science Foundation of the United States.}
\keywords{good pants, non-zero degree map, virtual property}

\date{\today}
\begin{abstract}
	It is shown in this paper that given any closed oriented hyperbolic 3-manifold, every closed oriented 3-manifold
	is mapped onto by a finite cover of that manifold via a map of degree 1,
	or in other words, virtually 1-dominated by that manifold.
	This improves a known result of virtual 2-domination.
	The proof invokes a recently developed enhanced version of the connection principle
	in good pants constructions.
\end{abstract}
%\begin{abstract}
%For any closed oriented hyperbolic $3$-manifold $M$, and any closed oriented $3$-manifold $N$, we show that $M$ admits a finite cover $M'$, such that there exists a degree-$1$ map $f\colon M'\to N$, i.e. $M$ virtually $1$-dominates $N$. This promotes the virtual $2$-domination result in \cite{Sun2} to virtual $1$-domination. The new ingredient in the proof is an enhanced version of the connection principle in \cite{Liu}.
%\end{abstract}

\maketitle

\section{Introduction}
	Over every $3$-manifold, there is a fantastic world of finite covers.
	Virtual objects can be observed in it and hidden properties can be revealed.
	Speaking of closed hyperbolic $3$-manifolds in general,	it is only for a few years
	that people have been able to
	explore that world with powerful devices.
	
	One of the devices is virtual specialization of finite-volume hyperbolic $3$-manifold groups.
	It has been completed by the celebrated Virtual Haken Theorem and Virtual Fibering Theorem due to I.~Agol \cite{Ag},
	based on the work of D.~Wise \cite{Wi} on quasiconvex hierarchies of word-hyperbolic special cube complexes.
	As a very strong consequence, the fundamental group of any finite-volume hyperbolic $3$-manifold is LERF.
	
	The package of good pants constructions is
	our new ruler and compass.
	As the LERF property enables us to separate subgroups,
	good pants constructions provide us a robust method
	to produce interesting ones,
	in closed hyperbolic $3$-manifold groups.
	In \cite{KM}, J.~Kahn and V.~Markovic has invented a way to
	construct a $\pi_1$-injectively immersed closed subsurface in a closed hyperbolic $3$-manifold,
	by pasting nearly totally geodesic regular pants in a nearly totally geodesic manner.
	In subsequent works of various authors,
	the construction is extended to build $\pi_1$-injectively immersed subsurfaces
	with prescribed boundary and relative homology class \cite{LM}, to build $\pi_1$-injectively 
    immersed subsurfaces in rank one locally symmetric spaces with $R$-varied good pants \cite{Ha},
	and to build certain $\pi_1$-injectively immersed $2$-complexes in a closed hyperbolic $3$-manifold
	for interesting applications \cite{Sun1,Sun2}.

	In this paper, we employ the developed techniques to prove the following theorem:
	
	\begin{theorem}\label{main}
		For any closed oriented hyperbolic $3$-manifold $M$, and any closed oriented $3$-manifold $N$,
		there exist a finite cover $M'$ of $M$ and a degree-$1$ map
		$f\colon M'\rightarrow N$.
		In other words, every closed oriented hyperbolic $3$-manifold virtually $1$-dominates
		any other closed oriented $3$-manifold.
	\end{theorem}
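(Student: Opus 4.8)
The plan is to realise, inside a suitable finite cover $M'$ of $M$, a topological model of $N$ assembled out of \emph{good} (nearly totally geodesic) surfaces, and then to collapse the rest of $M'$ onto a ball. Fix a Heegaard splitting $N=\mathcal{H}^{+}\cup_{\Sigma}\mathcal{H}^{-}$ of some large genus $g$, with meridian systems $\alpha=\{\alpha_{1},\dots,\alpha_{g}\}$ of $\mathcal{H}^{+}$ and $\beta=\{\beta_{1},\dots,\beta_{g}\}$ of $\mathcal{H}^{-}$ on $\Sigma$; thus $N\setminus\operatorname{int}B^{3}$ is obtained from the handlebody $\mathcal{H}^{+}$ by attaching $2$-handles along the curves $\beta_{i}\subset\partial\mathcal{H}^{+}$. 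I would first record a purely topological reduction: it suffices to produce an embedded compact submanifold $\widehat{N}\subset M'$ with \emph{connected} boundary, together with a degree-one map $\widehat{N}\to N\setminus\operatorname{int}B^{3}$ relative to the boundary. Indeed, the complementary region $W=M'\setminus\operatorname{int}\widehat{N}$ then also has connected boundary $\partial\widehat{N}$, so the resulting degree-one boundary pinch $\partial\widehat{N}\to S^{2}$ extends over $W$ to a map $W\to B^{3}$ (because $B^{3}$ is contractible); a comparison of relative fundamental classes shows that this extension automatically has degree one relative to the boundary, and gluing the two pieces along $\partial\widehat{N}$ yields a degree-one map $M'\to N$.

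The content is therefore the construction of $\widehat{N}$. I would choose an embedded genus-$g$ handlebody $\mathcal{H}'\subset M'$ whose meridian system realises $\alpha$ on $\partial\mathcal{H}'\cong\Sigma$ and whose longitudes are null-homologous in $M'$ — for instance a regular neighbourhood of a wedge of $g$ loops representing a free basis of a free subgroup of the commutator subgroup of $\pi_{1}(M')$. Then each $\beta_{i}$, viewed on $\partial\mathcal{H}'$, is null-homologous in $M'$, and the good-pants technology of Kahn--Markovic \cite{KM} in the relative form of Liu--Markovic \cite{LM}, together with the $2$-complex refinements of \cite{Sun1,Sun2}, produces in $M'$ a $\pi_{1}$-injective nearly geodesic compact surface $S_{i}$ whose boundary is a single copy of $\beta_{i}$ on $\partial\mathcal{H}'$ (this single-copy realisation being the delicate point, addressed below); by the LERF property of $\pi_{1}(M)$ \cite{Ag,Wi} one may pass to a further finite cover making the $S_{i}$ embedded, mutually disjoint, and arrange that they meet $\mathcal{H}'$ only along the $\beta_{i}$. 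With $\widehat{N}:=\mathcal{H}'\cup\nu(S_{1})\cup\dots\cup\nu(S_{g})$, where $\nu(S_{i})\cong S_{i}\times[-1,1]$ is glued to $\mathcal{H}'$ along an annular neighbourhood of $\beta_{i}$, one obtains an embedded submanifold with connected boundary. Since a compact surface with a single boundary circle admits a degree-one map onto a disc that is a homeomorphism on the boundary, combining such maps crossed with $[-1,1]$ on the $\nu(S_{i})$ with a matching homeomorphism $\mathcal{H}'\to\mathcal{H}^{+}$ assembles into the desired degree-one map $\widehat{N}\to N\setminus\operatorname{int}B^{3}$ relative to the boundary.

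The feature that improves on the earlier virtual $2$-domination is a parity phenomenon inherent to good-pants constructions. Built from regular pants of Euler characteristic $-1$ glued along their cuffs in matched pairs, these constructions classically realise a prescribed null-homologous multicurve as the boundary of a good surface only with an even multiplicity — equivalently, the good surfaces they produce have even Euler characteristic — and carrying this through the construction of $\widehat{N}$ above yields only a degree-\emph{two} map $M'\to N$. To bring the degree down to one, I would invoke the enhanced version of the connection principle: it supplies exactly the freedom needed, when connecting feet of good pants, to assemble good subsurfaces of \emph{odd} Euler characteristic, hence to realise each attaching curve $\beta_{i}$ as the boundary of a single good surface $S_{i}$ carrying the correct normal framing. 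This is the step I expect to be the main obstacle. Granting it, the multiplicities and orientations along every piece of $\widehat{N}$ match those in a handle decomposition of $N$, the assembled map $f\colon M'\to N$ has degree exactly one, and — $M'$ being a finite cover of $M$ — this proves that $M$ virtually $1$-dominates $N$.
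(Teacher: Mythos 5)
Your topological reduction (embed a compact $\widehat{N}\subset M'$ with connected boundary, map it with degree one onto $N$ minus a ball rel boundary, and fill the complementary region through $B^3$) is sound, and the Heegaard-splitting framework is genuinely different from the paper's route, which first replaces $N$ by a hyperbolic $N'$ that $1$-dominates it (Boileau--Wang) and then models the $2$-skeleton of a geometric triangulation of $N'$. The gap is exactly at the step you flag as the main obstacle, and the fix you propose misdescribes the tool. The $\Integral_2$-invariant $\sigma$ of Liu--Markovic is an invariant of panted cobordism: if $\sigma(\beta_i)\neq 0$, then \emph{no} assembly of good pants bounds a single copy of $\beta_i$, and the enhanced connection principle cannot ``connect feet of good pants'' around this; nor is the classical constraint that good surfaces have even Euler characteristic (a single pair of pants already has $\chi=-1$; the degree $2$ in \cite{Sun2} arises because one bounds $2L$ when $\sigma(L)\neq 0$). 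What Theorem \ref{connectionprinciple} actually provides is control of the relative class in $H_1(\SO(M),\,\cdot\,;\Integral)$ of the frame path along each connecting geodesic arc, so the vanishing of $\sigma(\beta_i)$ must be engineered when you build the $1$-complex (the core of $\mathcal{H}'$), before any surface is constructed. In your setup each $\beta_i$ is a fixed word $w_i$ in the $g$ core loops; once you insist the loops stay null-homologous, the evident remaining freedom is a full fiber twist on each loop, which changes $\sigma(\beta_i)$ only through the mod-$2$ count of letters of that loop occurring in $w_i$. You would need the resulting $\Integral_2$-linear system to be solvable against the initial vector of obstructions, and you give no argument for this; it is not automatic (already in a genus-one diagram with $\beta=\alpha^2$ the twist acts trivially on $\sigma(\beta)$, so you have no control at all).

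The paper's mechanism for precisely this point is the homological instruction of Proposition \ref{instruction}: a homomorphism $H_1(\SO(N),F_N;\Integral)\to H_1(\SO(M),F_M;\Integral)$ used to prescribe the frame classes of the constructed edges, so that each boundary curve $\gamma_{ijk}$ inherits its frame-path class from a curve that genuinely bounds a totally geodesic $2$-simplex in $N$, which forces $\sigma(\gamma_{ijk})=0$ (Lemma \ref{noObstruction}); this is also why the paper reduces to a hyperbolic target, since it needs $\SO(N)$. To repair your approach you would need an analogous transfer (for instance, prescribing the frame classes of the core arcs so that the lift of each $\beta_i$ matches that of a curve bounding a meridian disc of $\mathcal{H}^-$), not merely the availability of the connection principle. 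Two further points are compressed too much: Theorem \ref{bounding} applies only to $(R,\epsilon)$-good curves, so the core of $\mathcal{H}'$ must itself be built with controlled lengths, directions and holonomy so that every word $\beta_i$ closes up to a good geodesic, and one must interpolate between that geodesic and the curve on $\partial\mathcal{H}'$ by a nearly geodesic annulus (the crown in the paper's construction); and LERF/Scott only upgrades an already $\pi_1$-injective, quasiconvex immersed complex to an embedded one in a finite cover, so the joint $\pi_1$-injectivity and quasiconvexity of $\mathcal{H}'\cup S_1\cup\dots\cup S_g$ (the analogue of Lemma \ref{injectivity} and of \cite[Section 4]{Sun2}) must be established, not just that of each $S_i$ separately.
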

	
	By an earlier work of the second author \cite{Sun2},
	it has been proved that every closed oriented hyperbolic $3$-manifold
	virtually $2$-dominates any other closed oriented $3$-manifold.
	The degree $2$ there is taken so as to overcome
	a known $\Integral_2$--valued obstruction, which arises naturally in good pants constructions \cite{LM}.
	By invoking an enhanced connection principle, introduced lately by the first author \cite{Liu},
	we are able to directly maintain the vanishing of the obstructions
	in the course of our construction.
	The new construction leads to
	the promotion of the earlier result to Theorem \ref{main}.
	
	It is also known that every closed hyperbolic $3$-manifold $M$ has many
	virtual homological torsion	elements.
	In fact, every finite abelian group $A$ can be embedded into $H_1(M';\Integral)$ as a direct summand
	for	some finite cover $M'$ of $M$ \cite{Sun1}.
	Since every degree-$1$ map induces an embedding as a direct summand on cohomology,
	the result of \cite{Sun1} is also implied by Theorem \ref{main},
	taking $N$ to be a direct sum of lens spaces.
	Note that virtual $2$-domination only implies an embedding of $A$,
	but not necessarily as a direct summand.

	\begin{corollary}\label{kdomination}
		For any closed oriented hyperbolic $3$-manifold $M$, and any closed oriented $3$-manifold $N$, and any integer $k$,
		there exist a finite cover $M'$ of $M$ and a $\pi_1$--surjective degree-$k$ map $f\colon M'\to N$.
		In other words, $M$ virtually $\pi_1$-surjectively $k$-dominates $N$.
	\end{corollary}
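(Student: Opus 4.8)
The plan is to deduce the corollary formally from Theorem \ref{main}. The idea is that instead of applying virtual $1$-domination to $N$ directly, we apply it to an auxiliary closed oriented $3$-manifold $W$ that is designed to admit a $\pi_1$-surjective degree-$k$ map onto $N$, and then compose.

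\textbf{Reduction.} Suppose we have produced a closed oriented $3$-manifold $W$ together with a $\pi_1$-surjective map $h\colon W\to N$ of degree $k$. Apply Theorem \ref{main} to $M$ and $W$: there are a finite cover $M'\to M$ and a degree-$1$ map $g\colon M'\to W$. A degree-$1$ map between closed oriented connected $n$-manifolds is automatically $\pi_1$-surjective: if $\widetilde{W}\to W$ denotes the cover corresponding to $g_{\ast}\pi_1(M')$, then $g$ lifts through $\widetilde{W}$; an infinite-index lift would force $g_{\ast}[M']$ to lie in the image of $H_n(\widetilde{W};\Integral)=0$, while a finite index $m$ would give $m\mid \deg g=1$; so the index is $1$. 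Hence $h\circ g\colon M'\to N$ is $\pi_1$-surjective, and $\deg(h\circ g)=\deg(h)\deg(g)=k$, which is exactly the assertion of the corollary.

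\textbf{Construction of $W$.} Choose nonnegative integers $a,b$ with $a-b=k$ and $a+b\geq 1$ (when $k=0$ take $a=b=1$), and set $W=(\#^{a}N)\#(\#^{b}\overline{N})$, where $\overline{N}$ is $N$ with reversed orientation; this is a closed oriented $3$-manifold with $\pi_1(W)\cong \ast^{a+b}\pi_1(N)$ by the van Kampen theorem. Let $q\colon W\to (\vee^{a}N)\vee(\vee^{b}\overline{N})$ be the pinch map that collapses each of the $a+b-1$ connected-sum $2$-spheres to the wedge point; it carries $[W]$ to the sum of the fundamental classes of the wedge summands. Let $r\colon (\vee^{a}N)\vee(\vee^{b}\overline{N})\to N$ be the fold map which is the identity on each copy of $N$ and the orientation-reversing identity on each copy of $\overline{N}$. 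Then $h=r\circ q\colon W\to N$ has degree $a\cdot 1+b\cdot(-1)=k$, and since $h_{\ast}$ is the identity on each free factor of $\pi_1(W)$, the map $h$ is $\pi_1$-surjective. Together with the reduction above this completes the proof.

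\textbf{On the difficulty.} There is no serious obstacle here beyond the routine verification of the degree of $r\circ q$ and of $\pi_1$-surjectivity; the only points worth isolating are that one must feed the connected-sum manifold $W$ (not $N$) into Theorem \ref{main}, that the $\overline{N}$-summands are what realize negative degrees, and that imposing $a+b\geq 1$ is needed so that $W$ has a summand and $h$ can be $\pi_1$-surjective.
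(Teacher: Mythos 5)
Your proposal is correct and is essentially the paper's own argument: the paper also forms the connected sum $N^{\#p}\#(-N)^{\#q}$ with $p-q=k$, maps it to $N$ by the obvious pinch-and-fold (which is $\pi_1$-surjective of degree $k$), and composes with the virtual $1$-domination from Theorem \ref{main}. Your only additions are the explicit (and correct) verification that a degree-$1$ map is automatically $\pi_1$-surjective, which the paper leaves implicit, and allowing one of the summand multiplicities to be zero where the paper takes both positive.
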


	\begin{proof}
		Take two positive integers $p$ and $q$ such that $k$ equals $p-q$. Denote by
			$$N'=N^{\#p} \# (-N)^{\#q}$$
		the closed oriented $3$-manifold which is the connected sum of
		$p$ copies of $N$ and $q$ copies of the orientation-reversal of $N$.
		There is a map $N'\to N$ induced by identifying the factors with $N$ in an obvious fashion,
		which is $\pi_1$--surjective and degree-$k$.
		The virtual $1$-domination $M'\to N'$ by Theorem \ref{main}
		composed with the map $N'\to N$ yields a virtual $\pi_1$--surjective $k$-domination
		as claimed.
	\end{proof}

	Recently G.~Lakeland and C.~Leininger have constructed strict contractions
	between hyperbolic $3$-orbifolds and hyperbolic $4$-orbifolds \cite{LL}.
	Here a strict contraction refers to a degree-$1$ $\kappa$-Lipschitz map where $\kappa<1$.
	A closer examination of our construction yields the following metric refinement of Theorem \ref{main},
	which implies the existence of virtual strict contractions onto any Riemannian $N$,
	with respect to the hyperbolic metric of $M$.
	
	\begin{theorem}\label{contraction}
		For any closed oriented hyperbolic $3$-manifold $M$,
		and any closed oriented Riemannian $3$-manifold $N$,
		and any constant $\kappa>0$,
		there exist a finite cover $M'$ of $M$
		and a degree-$1$ $\kappa$-Lipschitz map $f\colon M'\rightarrow N$.
	\end{theorem}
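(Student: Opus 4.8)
The plan is to prove Theorem~\ref{contraction} by revisiting the proof of Theorem~\ref{main} and tracking metric scales throughout. The degree-$1$ map $f\colon M'\to N$ produced for Theorem~\ref{main} is built by first constructing, inside a suitable finite cover $M'$ of $M$, a $\pi_1$-injectively immersed (or embedded, after passing to a further cover) subcomplex that plays the role of the ``dominating skeleton'', and then extending a map to $N$ over the $3$-cells. The key point is that every ingredient in the good pants construction is controlled by a single parameter: one works with nearly geodesic pants whose cuffs have length within $O(e^{-R/2})$ of $R$ and whose feet match up within $O(e^{-R/2})$, for a large constant $R$. Passing to the finite cover $M'$ and realizing the pieces geodesically gives a complex all of whose edges and faces have diameter (as measured in the hyperbolic metric of $M'$, i.e.\ of $M$) on the order of $R$, up to universal multiplicative constants. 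The first thing I would do is therefore state precisely, as a lemma extracted from the proof of Theorem~\ref{main}, that for every sufficiently large $R$ the cover $M'$ and map $f$ can be chosen so that $f$ is $C R$-Lipschitz for a universal constant $C$ depending only on $N$ (and on the bounded combinatorial complexity of the gluing), where the hyperbolic metric on $M'$ is the one pulled back from $M$.

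Next I would rescale. Given a target Lipschitz constant $\kappa$, I cannot shrink the Riemannian metric on $N$, but I can enlarge the injectivity radius of the domain: since the bound $C R$ is independent of which finite cover of $M'$ we take, I pass to a further finite cover $M''\to M'$ and rescale by observing that the map $f''\colon M''\to N$ obtained by composing with the covering projection $M''\to M'$ is still degree-$1$ (covering maps have degree $1$ when we count sheets appropriately --- more precisely, I should instead keep $M'$ fixed and use that the Lipschitz constant $CR$ is already \emph{fixed}, then replace the hyperbolic metric argument by a scaling of the combinatorial model). The clean way is this: in the construction we are free to take $R$ as large as we like and still get a valid cover $M'(R)$; but the diameter bound $CR$ grows with $R$, which is the wrong direction. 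The correct fix is to note that the relevant geometric quantity is not the absolute diameter but the ratio of the diameter of each building block to the distance it must be spread over in $N$; by arranging that the image $f(M')$ factors through a fixed triangulation of $N$ independent of $R$ and then \emph{subdividing} that triangulation of $N$ finely (into simplices of diameter $<\kappa/(C'R)$), one can take $f$ to be simplicial with respect to the subdivided triangulation, hence $\kappa$-Lipschitz. So the real additional input is: realize $f$ as a simplicial map to an arbitrarily fine subdivision of a triangulation of $N$, and check that this only costs a bounded combinatorial factor.

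The key steps, in order, are: (1) isolate from the proof of Theorem~\ref{main} the statement that $f\colon M'\to N$ is homotopic to a simplicial map with respect to \emph{some} fixed triangulation $\mathcal{T}$ of $N$, with each simplex of $M'$ (in the natural CW structure coming from the pants pieces and the extension over cells) mapped to a simplex of $\mathcal{T}$, and with the number of simplices of $M'$ mapping to a given simplex of $\mathcal{T}$ bounded independently of the cover; (2) replace $\mathcal{T}$ by its $m$-th barycentric subdivision $\mathcal{T}^{(m)}$, for $m$ chosen so that every simplex of $\mathcal{T}^{(m)}$ has diameter $<\kappa/(2L)$, where $L$ is the bound on the ``combinatorial Lipschitz constant'' of $f$ from step~(1) --- that is, the maximal number of $\mathcal{T}$-simplices a single $M'$-simplex must be subdivided to cover; (3) homotope $f$ to be simplicial with respect to $\mathcal{T}^{(m)}$ on the domain side by correspondingly subdividing the CW structure of $M'$, keeping the map degree-$1$ and noting that the image of each domain simplex now lies in a star of diameter at most $L\cdot\kappa/(2L)<\kappa$, so $f$ is $\kappa$-Lipschitz once we give each domain simplex the affine (geodesic) structure; (4) conclude, observing that degree is a homotopy invariant so degree-$1$ is preserved, and that $M'$ is a finite cover of $M$ as required.

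The main obstacle I expect is step~(1): making precise that the map constructed in the proof of Theorem~\ref{main} genuinely has bounded combinatorial complexity relative to a \emph{fixed} target triangulation, uniformly over the finite covers produced by the good pants machinery and the subsequent separability (LERF) arguments. The extension of the map over the $3$-cells --- which is where the degree-$1$ condition and the vanishing-of-obstruction argument via the enhanced connection principle of \cite{Liu} live --- must be shown to be doable ``locally'', i.e.\ cell by cell with a bounded template, rather than in a way whose complexity blows up with the index of the cover. Once that uniformity is in hand, the rescaling by subdivision is routine; so the bulk of the work, and the only part genuinely tied to the specifics of the earlier construction, is auditing that construction for this boundedness. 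I would organize the write-up so that Theorem~\ref{main} is proved in a form that already exhibits this bounded template, so that Theorem~\ref{contraction} follows with only the short subdivision argument sketched above.
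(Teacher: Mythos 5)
There is a genuine gap, and it is at the heart of your argument: the ``rescale by subdivision'' step does not work, because a Lipschitz constant is a metric invariant of a map and cannot be improved by retriangulating either side. If you homotope $f$ to be simplicial with respect to a fine subdivision $\mathcal{T}^{(m)}$ of the target, you must subdivide the domain comparably finely to achieve simpliciality (each domain cell must map into a star of the fine target triangulation), and then each tiny domain simplex maps onto a target simplex whose diameter is comparable to the domain simplex diameter times the original stretch factor $CR$; the local Lipschitz constant of the simplicial approximation is again of order $CR$, not $\kappa$. Your step (3) in particular reasons from ``the image of each domain simplex has diameter $<\kappa$'' to ``$f$ is $\kappa$-Lipschitz'', which is false: two nearby points in a small domain simplex can have images almost $\kappa$ apart, so small image diameter gives no bound on the ratio of distances. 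The only way to get a small Lipschitz constant is to make the domain building blocks metrically large relative to the target pieces they are sent to, and that has to be arranged \emph{inside} the construction, not by post-processing the topological map.

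This is exactly what the paper does, and it is a different mechanism from yours. For a hyperbolic target the fineness of the geometric triangulation is part of the initial data: the edges of the triangulation of the target are required to have length at most $10^{-2}\kappa$ (Theorem \ref{virtual_one_domination_hyperbolic}), while the corresponding domain pieces are inherently large in the hyperbolic metric of $M'$ (the arcs $e_{ij}^M$ have length about $L\geq 1$, the surfaces $S_{ijk}^M$ have no essential arcs shorter than $1$), so the map collapsing big blocks onto tiny handles is $\kappa$-Lipschitz; in addition the collar radius is taken to be $r=\kappa^{-1}(1+\mathrm{Diam}(N))$ so that the collapsing of the complementary regions is also $\kappa$-Lipschitz (Lemma \ref{one_domination}). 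For a general Riemannian $N$ one cannot run the good pants instruction directly, so the paper first takes a hyperbolic $N'$ which $1$-dominates $N$ via a $\lambda$-Lipschitz map (Boileau--Wang) and applies the hyperbolic case with constant $\lambda^{-1}\kappa$; your write-up also omits this reduction and the bookkeeping of the constant $\lambda$. To repair your approach you would have to replace the subdivision trick by choosing, before the good pants construction is run, a triangulation of the hyperbolic intermediate target with mesh proportional to $\kappa$ and a correspondingly large neighborhood radius for the virtual embedding, i.e.\ essentially reproduce the paper's argument.
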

	
	In the rest of the introduction, we point out a few major differences
	between our construction of virtual $1$-dominations (Theorem \ref{main})
	and the earlier construction of virtual $2$-dominations by \cite{Sun2}.
	The reader is referred to the introduction of Section \ref{Sec-hypCase}
	for a more detailed outline of the proof.
	
	First, we abandon use of a 3-dimensional
	pseudo-manifold or orbifold as an intermediate stage of the construction.
	Instead, the bulk of our construction produces a virtual $1$-domination
	onto a closed oriented hyperbolic $3$-manifold
	that $1$-dominates the considered target.
	The intermediate hyperbolic $3$-manifold can be fairly general,
	as we only need to extract information from an arbitrary geometric triangulation
	of it to instruct our construction.
	Not only is the new intermediate object more familiar in appearance,
	but it allows us to properly speak of certain associated structures,
	such as the frame bundle and the homology.
	
	Secondly, as mentioned,
	our construction requires careful control of the $\Integral_2$--valued obstructions.
	To be more precise, we follow a similar strategy as \cite{Sun2}	to build
	a $\pi_1$-injectively immersed quasiconvex $2$-subcomplex in the domain,
	using good pants constructions.
	When we build the (analogue) $1$-skeleton over the $0$-skeleton,
	it suffices, for virtual $2$-domination, to control the relative homology classes
	of the connecting geodesics.
	However, in the case of virtual $1$-domination,
	the relative homology classes	of lifts of those geodesics, which are certain paths of frames in the frame bundle,
	must also be controlled.
	Such kind of issue is exactly
	what the enhanced connection principle of \cite{Liu} takes care of,
	(see Theorem \ref{connectionprinciple} for the quoted statement).
	
	Thirdly, we shift the emphasis of our exposition
	to homological calculations of frame paths.
	Homological calculations are vital for our argument
	but quite straightforward in \cite{Sun2}.
	On the other hand, a significant part of \cite{Sun2} is devoted to demonstrating that
	the aforementioned immersed $2$-subcomplex is $\pi_1$--injective and quasiconvex.
	Its argument, and estimates, can be adapted easily to our situation.
	To avoid meaningless repeat of work,
	we only include necessary modifications in the corresponding part of this paper,
	and refer the reader to \cite[Section 4]{Sun2} for full details.
	
	The organization of the paper is as the following. In Section \ref{Sec-prelim},
	we give a quick review of good pants constructions. In Section \ref{Sec-hypCase},
	we prove the core case of main theorem and its metric refinement by assuming that
	the target is a closed oriented hyperbolic $3$-manifold. In Section \ref{Sec-generalCase},
	we prove the general case by deriving from the core case.

\subsection*{Acknowledgement}
The authors are grateful to Grant Lakeland and Christopher Leininger for informing us their work \cite{LL} on strict contraction between hyperbolic $3$-orbifolds and hyperbolic $4$-orbifolds.

\section{Preliminaries}\label{Sec-prelim}

	In this section, we give a quick review of the work of Kahn--Markovic
	and its following development.
	All the material in this section can be found in \cite{KM,LM,Liu}.

	In \cite{KM}, Kahn and Markovic proved the celebrated Surface Subgroup Theorem,
	which was employed as the first step for proving Thurston's Virtual Haken Conjecture \cite{Ag}.
	
	\begin{theorem}[Surface Subgroup Theorem]\label{surface}
		For any closed hyperbolic $3$-manifold $M$,
		there exists an immersed closed hyperbolic surface $f\colon S\looparrowright M$,
		such that $f_*\colon \pi_1(S)\rightarrow 	\pi_1(M)$ is an injective map.
	\end{theorem}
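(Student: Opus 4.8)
The plan is to realize $S$ as an assembly of \emph{good pants} in $M$, which is the strategy underlying all of the good pants constructions referred to above. Fix a large parameter $R>0$ and a small $\epsilon>0$. A good pair of pants is an immersed hyperbolic pair of pants in $M$ whose three cuffs are primitive closed geodesics of complex length within $\epsilon$ of $R$, and which is within $\epsilon$ of being totally geodesic; concretely such a pair of pants is encoded by a triple of oriented closed geodesics together with ``feet'' data in the frame bundle $\mathrm{F}M$ recording how the immersion sits along each cuff. The first step is to show that good pants exist in abundance and are \emph{equidistributed}: for a fixed good curve $\gamma$, the number of good pants attached to a prescribed sub-segment of $\gamma$ with a prescribed direction of the opposite foot is asymptotically independent of those choices, with a multiplicative error tending to $1$ as $R\to\infty$. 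This is the analytic core of the argument, and it rests on exponential mixing of the frame flow on $\mathrm{F}M$ (equivalently, a spectral gap for the geodesic flow), from which the required counting and equidistribution estimates are extracted, as in \cite{KM}.

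Granting abundance and equidistribution, the second step is the \emph{assembly}. One glues good pants along their common cuffs so that the resulting closed surface is everywhere nearly totally geodesic: for each good curve $\gamma$ that occurs, the two pants meeting along a chosen lift of $\gamma$ must be attached with a normalized shearing (bending) parameter close to $1$, so that locally the surface looks like two nearly geodesic half-pants meeting nearly flatly across $\gamma$. The obstruction to doing this globally is a matching problem: along each good curve the feet ``pointing one way'' must be paired with feet ``pointing the other way, shifted by the correct amount'', and the resulting multiset of pants must close up into a surface. Because equidistribution makes the two relevant measures on the space of feet nearly equal, a Hall-type perfect matching argument — after replacing each pair of pants by a bounded common multiple to absorb small discrepancies — yields the desired pairing, hence a closed oriented surface $S$ together with an immersion $f\colon S\looparrowright M$ that is $\epsilon$-nearly totally geodesic in a precise combinatorial-geometric sense.

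The final step, which I expect to be the main obstacle, is to upgrade near-totally-geodesicity into the global conclusion that $f_*$ is injective. The point is to show that, with the path metric induced by the nearly geodesic pants decomposition, the developing map $\widetilde S\to\mathbb{H}^3$ is a quasi-isometric embedding. This is delicate because $S$ is built from many pieces and the errors, though individually of size $O(\epsilon)$ or $O(e^{-R/2})$, must be prevented from accumulating along long paths. The standard way to control this is to work in the universal cover, follow the sequence of cuff geodesics crossed by a geodesic of $\widetilde S$, and combine the near-flatness of each crossing with the exponential divergence of geodesics in $\mathbb{H}^3$ (a thin-triangles/$\mathrm{CAT}(-1)$ estimate) to show that the image fellow-travels a genuine bi-infinite geodesic of $\mathbb{H}^3$; equivalently, one runs an Anosov-type argument showing a nearly geodesic lamination of nearly geodesic pants cannot fold back on itself. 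Once $f$ is a quasi-isometric embedding on $\pi_1$, injectivity of $f_*\colon\pi_1(S)\to\pi_1(M)$ is immediate, and in fact $f_*(\pi_1(S))$ is quasiconvex, i.e. quasi-Fuchsian, in $\pi_1(M)$.
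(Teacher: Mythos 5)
Your outline is essentially the argument of Kahn--Markovic, which is exactly what this paper does for Theorem~\ref{surface}: it is quoted from \cite{KM} rather than proved, and the strategy sketched in Section~\ref{Sec-prelim} (abundance and equidistribution of good pants via exponential mixing of the frame flow, matching feet along good curves with the almost $1$-shift, and then showing the nearly totally geodesic assembled surface is quasi-Fuchsian, hence $\pi_1$-injective) coincides with your steps. So your proposal is correct as a proof plan and takes the same route as the source the paper relies on; the genuinely hard analytic and geometric estimates are of course carried out in \cite{KM}.
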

	
	The immersed subsurface of Kahn--Markovic is built
	by pasting a large collection of so-called good pants
	together in a nearly totally geodesic way. Their method has been developed
	further to build $\pi_1$--injectively immersed objects with more variations.
	In this paper, we generally use the term \emph{good pants construction}
	to refer to any construction following that strategy.
	It may also be considered as
	the package of fundamental constructions that we describe
	in this preliminary section.
	
	\subsection{Good curves and good pants}
	We fix a closed hyperbolic $3$-manifold $M$, a small number $\epsilon>0$ and a large number $R>0$.
	An $(R,\epsilon)$--\emph{good curve} is an oriented closed geodesic in $M$ with complex length $2\epsilon$--close to $R$,
	namely, its length is close to $R$ and rotation part is close to $0$.
	The set consists of all such $(R,\epsilon)$--good curves
	is denoted by ${\bold \Gamma}_{R,\epsilon}$.
	A pair of \emph{$(R,\epsilon)$-good pants}
	is a homotopy class of immersed oriented pair of pants $\Pi \looparrowright M$,
	such that the three cuffs of $\Pi$ are mapped to closed geodesics
	$\gamma_i\in {\bold \Gamma}_{R,\epsilon}$,
	and the complex half length $\bold{hl}_{\Pi}(\gamma_i)$ of $\gamma_i$ with respect to $\Pi$
	satisfies
		$$\left|\bold{hl}_{\Pi}(\gamma_i)-\frac{R}{2}\right|<\epsilon$$
	for $i=1,2,3$,
	(see \cite[Section 2.1]{KM} for the precise definition of complex length and $\bold{hl}_{\Pi}(\gamma)$).
	
	A important innovation of Kahn--Markovic is that
	good pants are pasted along a good curve with an almost $1$-shifting, rather than exactly matching seams along the common cuff.
	In complex Fenchel--Nielsen coordinates,
	this condition can be written as
		$$|s(C)-1|<\frac{\epsilon}{R},$$
	(see \cite[Section 2.1]{KM}).
	The almost $1$-shifting is a crucial condition to guarantee
	that $f_*\colon \pi_1(S)\rightarrow \pi_1(M)$ is injective.
	Moreover, Kahn and Markovic show that, for any good curve $\gamma$,
	feet of good pants are almost evenly distributed over $\gamma$.
	This roughly means that the counting measure of the feet of good pants on $\gamma$ is
	$\frac{\epsilon}{R}$--close to a scaling of
	the Lebesgue measure on (the antipodal-involution quotient of)
	unit normal bundle over $\gamma$,
	(see \cite[Theorem 3.4]{KM}).
	It can be implied that a large collection of good pants in $M$
	exist and can be pasted in the almost $1$-shifting fashion.
	Therefore, the asserted immersed surface can be constructed.

	\subsection{A mod 2 invariant of panted cobordism}
	For a closed oriented hyperbolic $3$-manifold $M$, and a point $p\in M$,
	a \emph{special orthonormal frame} (or simply a frame) of $M$ at $p$ is
	a triple of unit tangent vectors $(\vec{t}_p,\vec{n}_p,\vec{t}_p\times \vec{n}_p)$
	such that $\vec{t}_p,\vec{n}_p\in T_p^1M$ with $\vec{t}_p\perp \vec{n}_p$,
	and that $\vec{t}_p\times \vec{n}_p\in T_p^1M$ is the cross product with respect to the orientation of $M$.

	We use $\SO(M)$ to denote the frame bundle over $M$ which consists of all special orthonormal frames of $M$.
	For any oriented closed geodesic $\gamma\in {\bold \Gamma}_{R,\epsilon}$, a \emph{canonical lifting} of $\gamma$
	is defined in \cite{LM}. It is an oriented closed curve $\hat{\gamma}$ in $SO(M)$ defined as the following.

	Take a point $p\in \gamma$, and a frame ${\bold p}=(\vec{t}_p,\vec{n}_p,\vec{t}_p\times \vec{n}_p)$ over $p$
	such that $\vec{t}_p$ is tangent to $\gamma$.
	Then $\hat{\gamma}$ is defined as the concatenation of the following three paths:
	\begin{itemize}
		\item The parallel transportation of ${\bold p}$ along $\gamma$.
		Denote the resulting frame by ${\bold p'}=(\vec{t}'_p,\vec{n}'_p,\vec{t}'_p\times \vec{n}'_p)$.
		\item The closed path in $\SO(M)|_p$ which is given by a $2\pi$-rotation of ${\bold p'}$ along $\vec{t}'_p$ counterclockwise.
		Any closed path that represents the nontrivial element of $H_1(\SO(3);\mathbb{Z})$ actually works as well.
		\item The shortest path, which is $\epsilon$-short, in $\SO(M)|_p$ from ${\bold p'}$ to ${\bold p}$.
	\end{itemize}

	The homology class $[\hat{\gamma}]\in H_1(\SO(M);\mathbb{Z})$ does not depend on the choice of the point $p\in \gamma$ and the frame ${\bold p}$ at $p$.
	So a homomorphism $\Psi\colon \mathbb{Z}{\bold \Gamma}_{R,\epsilon}\to H_1(\SO(M);\mathbb{Z})$ is well defined.
	For any $L \in \mathbb{Z}{\bold \Gamma}_{R,\epsilon}$,
	the projection of $\Psi(L)$ in $H_1(M;\Integral)$
	is exactly the homology class of $L$.
	One of the main result in \cite{LM} shows that the boundary of $\mathbb{Z}{\bold \Pi}_{R,\epsilon}$,
	the free abelian group generated by the set ${\bold \Pi}_{R,\epsilon}$,
	is exactly the kernel of the homomorphism $\Psi$:
	
	\begin{theorem}\label{homology}
		Given a closed oriented hyperbolic $3$-manifold $M$,
		for small enough $\epsilon>0$ depending on $M$ and large enough $R>0$ depending on $\epsilon$ and $M$,
		denote by ${\bold \Omega}_{R,\epsilon}(M)=\Integral{\bold \Gamma}_{R,\epsilon}/\partial\Integral{\bold \Pi}_{R,\epsilon}$
		the \emph{panted cobordism group} of $M$.
		Then there is a natural isomorphism
		$$\Phi\colon{\bold \Omega}_{R,\epsilon}(M)\rightarrow H_1(\SO(M);\Integral).$$
	\end{theorem}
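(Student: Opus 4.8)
The plan is to show that the homomorphism $\Psi\colon\Integral{\bold \Gamma}_{R,\epsilon}\to H_1(\SO(M);\Integral)$ furnished by the canonical-lifting construction factors through an isomorphism $\Phi$ of ${\bold \Omega}_{R,\epsilon}(M)=\Integral{\bold \Gamma}_{R,\epsilon}/\partial\Integral{\bold \Pi}_{R,\epsilon}$ onto $H_1(\SO(M);\Integral)$, in three steps: $\partial\Integral{\bold \Pi}_{R,\epsilon}\subseteq\ker\Psi$ (so that $\Phi$ is well defined), $\Phi$ is surjective, and $\Phi$ is injective. Throughout, the smallness of $\epsilon$ and largeness of $R$ are taken so that the connection principle and the equidistribution of feet of good pants \cite[Theorem~3.4]{KM} apply, along with the attendant nearly-geodesic estimates. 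For well-definedness, one first lifts each $(R,\epsilon)$-good pants $\Pi\looparrowright M$ to a map $\hat\Pi\looparrowright\SO(M)$: a pair of pants has the homotopy type of a graph, so the obstruction to such a lift vanishes. Along each cuff $\gamma_i$ the restriction of $\hat\Pi$ covers $\gamma_i$, hence differs from the canonical lift $\hat\gamma_i$ by an element of the fibre subgroup $\Integral/2\subseteq H_1(\SO(M);\Integral)$; twisting $\hat\Pi$ along the two free generators of $\pi_1(\Pi)$ changes these three discrepancies in pairs, so their sum is a $\Integral/2$-valued invariant of $\Pi$ alone. A local computation at the cuffs using the defining good-pants inequalities on the half-lengths and the almost-$1$-shift — the reason the $2\pi$-rotation is inserted into the canonical lift — shows this invariant is zero, so $\Psi(\partial\Pi)=\sum_i[\hat\gamma_i]=[\partial\hat\Pi]=0$. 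Thus $\Phi$ is defined, and its naturality is formal since canonical liftings pull back under covers.

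For surjectivity, note that $M$, being a closed oriented $3$-manifold, is parallelizable, so the fibration $\SO(3)\to\SO(M)\to M$ gives a split exact sequence $0\to\Integral/2\to H_1(\SO(M);\Integral)\to H_1(M;\Integral)\to0$, and the projection of $\Psi(\gamma)$ to $H_1(M;\Integral)$ equals $[\gamma]$ as recorded above. It is a standard consequence of the connection principle that the classes of good curves generate $H_1(M;\Integral)$, so the image of $\Phi$ maps onto $H_1(M;\Integral)$; and the nontrivial fibre class is itself realized by $\Psi$, for instance as $\Psi(\gamma)-\Psi(\gamma')$ for a pair of good curves homologous in $M$ that are constructed via the connection principle with a half-turn difference in the normal framing of the connecting geodesic segment. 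Hence $\Phi$ is onto.

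Injectivity is the heart of the matter, and the step I expect to be the main obstacle. What must be shown is that $\Psi(L)=0$ forces $L\in\partial\Integral{\bold \Pi}_{R,\epsilon}$, that is, that a collection of good curves summing to zero in $H_1(\SO(M);\Integral)$ bounds a panted cobordism, an immersed subsurface of $M$ assembled from $(R,\epsilon)$-good pants. A natural organization is to first use good pants to merge the curves of $L$ pairwise — any good curve is a cuff of good pants whose remaining two cuffs range over a positive-density set, which leaves enough room to bring curves into mergeable position — reducing, modulo $\partial\Integral{\bold \Pi}_{R,\epsilon}$, to the case $L=\gamma-\gamma'$ of a difference of two good curves with $\Psi(\gamma)=\Psi(\gamma')$, and then cobounding $\gamma$ and $\gamma'$. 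The rational part of the obstruction to such a cobordism vanishes because $[\gamma]=[\gamma']$ in $H_1(M;\Integral)$; what remains is a $2$-torsion, $\SO(M)$-sensitive obstruction, coming from the requirement that the normal framings of the assembled good pants agree along every interior cuff and match $\hat\gamma$, $\hat\gamma'$ along the boundary, and the total mismatch around the would-be subsurface is precisely $\Psi(\gamma)-\Psi(\gamma')$, hence zero by hypothesis.

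To carry this out one builds the subsurface by the good-pants assembly method, using the equidistribution of feet to produce, cuff by cuff, good pants sharing boundary curves, with the connection principle supplying the freedom to adjust the length of each connecting geodesic arc up to $\epsilon/R$-error and, crucially, to rotate its normal framing by an arbitrary amount up to the same error. At each gluing one uses such a rotation to trivialize the local $\Integral/2$-discrepancy; the accumulated effect of all these corrections is governed by a single global $\Integral/2$-class, and the hypothesis $\Psi(L)=0$ is exactly the statement that this class is trivial, so the assembly closes up into a subsurface whose oriented boundary is $L$, witnessing $L\in\partial\Integral{\bold \Pi}_{R,\epsilon}$. Isolating that single global obstruction, identifying it with $\Psi(L)$, and keeping every pair of pants good and every estimate under control throughout the inductive assembly is where I expect the real difficulty to lie; it is essentially the content of \cite{LM}, and the present paper proceeds by invoking an enhanced form of the connection principle \cite{Liu}.
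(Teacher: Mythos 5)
You should note at the outset that the paper contains no proof of Theorem \ref{homology}: it is stated as a quoted result of Liu--Markovic \cite{LM}, so within this paper the ``proof'' is a citation, and a genuine proof must reproduce the long argument of that paper. Measured against that, your proposal is a reasonable roadmap --- factor the canonical-lift homomorphism $\Psi$ through ${\bold \Omega}_{R,\epsilon}(M)$, get surjectivity from the connection principle, get injectivity by assembling good pants --- but it is not a proof, and two steps are argued with the wrong mechanism. For well-definedness, your observation that the sum of the three cuff discrepancies is independent of the chosen lift of $\Pi$ (because the cuff classes sum to zero in $H_1(\Pi;\Integral/2)$) is fine, but the vanishing of that invariant is not a ``local computation at the cuffs using the half-length inequalities and the almost-$1$-shift'': the almost-$1$-shift is a condition on how two pants are glued along a common cuff and is irrelevant to a single pants, while the half-length bounds only ensure the comparison frames are close enough for the short closing paths to exist. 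The actual reason is a relative Euler class (Poincar\'e--Hopf) parity count: the tangential framing along $\partial\Pi$ differs from the restriction of an honest lift of $\Pi$ by $\chi(\Pi)\equiv 1\pmod 2$ times the fibre class, and the three inserted $2\pi$-rotations contribute $3\equiv 1$ more, so the total vanishes --- this cancellation is exactly why the $2\pi$-rotation is built into the canonical lift. Similarly, in the surjectivity step the fibre class is realized by making two frame paths differ by the nontrivial loop of $\SO(3)$, i.e.\ a full $2\pi$ of relative rotation; a ``half-turn difference in the normal framing'' does not return the terminal frame to (near) the prescribed one and would destroy the goodness of the resulting curve.

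The decisive gap is injectivity. Your plan --- merge the components of $L$ in pairs to reduce to $\gamma-\gamma'$, then assert that the only obstruction to cobounding is a single global $\Integral/2$-class equal to $\Psi(L)$ --- is precisely the statement to be proved, not an argument for it. Carrying it out requires the machinery of \cite{LM}: good segments and chains built with the connection principle, the splitting, merging and rotation moves realized by explicit good pants, the equidistribution of feet to keep every pants $(R,\epsilon)$-good through the many gluings, and the bookkeeping that identifies the residual obstruction with the fibre component of $\Psi(L)$. You defer exactly this to \cite{LM} yourself, so as it stands the proposal is an outline of where the proof lives rather than a proof; in the present paper the theorem is simply imported, and the enhanced connection principle (Theorem \ref{connectionprinciple}) of \cite{Liu} is a separate tool used later, not the engine of this particular result.
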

	
	Suppose $L\in\Integral{\bold \Gamma}_{R,\epsilon}$ is a null-homologous good multi-curve.
	Since $\Phi(L)\in H_1(\SO(M);\Integral)\cong H_1(M;\Integral)\times H_1(\SO(3);\Integral)$ has trivial projection in the first factor,
	we define $\sigma(L)$ to be the projection of $\Phi(L)$ to the second factor $H_1(\SO(3);\mathbb{Z})\cong \Integral_2$.

	The following theorem determines
	when a null-homologous good multi-curve bounds
	an immersed oriented $(R,\epsilon)$--nearly totally geodesic subsurface in $M$,
	or more precisely, an immersed subsurface pasted by $(R,\epsilon)$--good pants in the almost $1$-shifting fashion.
	This result is stated in the following form in \cite{Sun2} and proved in \cite{LM}.
	
	\begin{theorem}\label{bounding}
		Let $M$ be a closed hyperbolic $3$-manifold.
		For any small enough $\epsilon>0$ depending on $M$, and any large enough $R>0$ depending on $\epsilon$ and $M$,
		the following statement holds.
		For any null-homologous oriented $(R,\epsilon)$--multicurve $L\in \Integral{\bold \Gamma}_{R,\epsilon}$,
		there exists a nontrivial invariant $\sigma(L)\in \mathbb{Z}_2$ defined and having the following properties:
		\begin{itemize}
			\item $\sigma(L_1\sqcup L_2)=\sigma(L_1)+\sigma(L_2)$.
			\item $\sigma(L)=\bar{0}$ if and only if $L$ bounds an immersed oriented $(R,\epsilon)$--nearly totally geodesic subsurface $S$ in $M$.
			Moreover, if we associate to each component $l_i$ of $L$, $i=1,\cdots,n$,
			a normal vector $\vec{v}_i\in N^1(\sqrt{l_i})$,
			then the surface $S$ can be constructed to satisfy the following extra requirement.
			Let $C_i$ be the boundary component of $S$ which is mapped to $l_i$,
			and $\Pi_i$ be the pair of pants in $S$ having $C_i$ as one of its cuffs,
			then the foot of $\Pi_i$ on $l_i$ is $\frac{\epsilon}{R}$--close to $\vec{v}_i$.
		\end{itemize}
	\end{theorem}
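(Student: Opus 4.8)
The plan is to deduce the statement from the panted cobordism isomorphism of Theorem~\ref{homology} together with the feet-equidistribution and gluing techniques underlying the Surface Subgroup Theorem. Fix $\epsilon>0$ small and $R>0$ large so that Theorem~\ref{homology} holds and so that, for every $(R,\epsilon)$--good curve $\gamma$, the feet of $(R,\epsilon)$--good pants having a cuff on $\gamma$ are $\frac{\epsilon}{R}$--equidistributed on $N^1(\sqrt{\gamma})$ in the sense of \cite{KM}. Given a null-homologous $L\in\Integral{\bold\Gamma}_{R,\epsilon}$, its image $\Phi(L)$ in $H_1(\SO(M);\Integral)\cong H_1(M;\Integral)\times H_1(\SO(3);\Integral)$ has first coordinate $[L]=0$, so it lies in the $H_1(\SO(3);\Integral)\cong\Integral_2$ summand; define $\sigma(L)$ to be this element. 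Since $L\mapsto\Phi(L)$ followed by projection to $\Integral_2$ is a composition of homomorphisms and disjoint union is addition in $\Integral{\bold\Gamma}_{R,\epsilon}$, we get $\sigma(L_1\sqcup L_2)=\sigma(L_1)+\sigma(L_2)$, and $\sigma$ is not identically $\bar 0$ because $\Phi$ is onto, so some null-homologous $L$ hits the generator of $\Integral_2$. Finally, if $L=\partial S$ for an immersed oriented $(R,\epsilon)$--nearly totally geodesic subsurface $S$, then $S$ is pasted from finitely many good pants $\Pi_1,\dots,\Pi_k$ along interior cuffs, so $L=\partial(\Pi_1+\dots+\Pi_k)\in\partial\Integral{\bold\Pi}_{R,\epsilon}$; hence $L$ represents $0$ in ${\bold\Omega}_{R,\epsilon}(M)$ and $\sigma(L)=\bar 0$. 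This is the ``only if'' direction.

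For the ``if'' direction, assume $L$ is null-homologous with $\sigma(L)=\bar 0$; then $\Phi(L)=0$, so by the injectivity of $\Phi$ in Theorem~\ref{homology}, $L=\partial c$ for an integral combination $c=\sum_j n_j\Pi_j$ of good pants. I would first reduce to nonnegative coefficients: replacing each $\Pi_j$ with $n_j<0$ by $|n_j|$ copies of its orientation reversal (again an $(R,\epsilon)$--good pair of pants) turns $c$ into a genuine disjoint union $\mathcal P$ of good pants whose boundary cuffs, apart from those realizing $L$, can be matched in pairs, each pair mapping to a common good curve with opposite induced orientations. Along each such interior good curve $\gamma$ the two feet-sets have equal cardinality and are each $\frac{\epsilon}{R}$--close to the same scaled Lebesgue measure on $N^1(\sqrt{\gamma})$, so a Hall-type matching argument as in \cite{KM,LM} yields a bijection between them realizing a pasting with $|s-1|<\epsilon/R$, i.e.\ an almost $1$--shifting. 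Performing all these pastings at once glues $\mathcal P$ into an immersed oriented nearly totally geodesic subsurface $S$ with $\partial S=L$, at the cost of slightly enlarging $\epsilon$; this loss is absorbed by choosing the constants appropriately at the outset. I expect this assembly --- in particular showing the almost $1$--shifted matchings can be arranged along all interior curves simultaneously --- to be the main obstacle, and it is precisely the theorem proved in \cite{LM}.

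For the ``moreover'' clause, given $\vec v_i\in N^1(\sqrt{l_i})$ I would use equidistribution to choose, for each $i$, an $(R,\epsilon)$--good pair of pants $P_i$ having $l_i$ among its cuffs whose foot on $l_i$ is $\frac{\epsilon}{R}$--close to $\vec v_i$. Let $L_0\in\Integral{\bold\Gamma}_{R,\epsilon}$ be the combination of the other two cuffs of each $P_i$, so that $\partial(P_1+\dots+P_n)=L+L_0$; then $L_0=\partial(\sum_iP_i)-L$ is null-homologous, and by additivity with the ``only if'' direction applied to $\bigsqcup_i P_i$ we get $\sigma(L_0)=\sigma(L)=\bar 0$. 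Running the assembly of the previous paragraph on $-L_0$, but now with the boundary feet along $L_0$ prescribed so as to be almost $1$--shifted against the feet of the $P_i$ --- the same equidistribution-and-matching argument carried out relative to the boundary curves --- produces a nearly totally geodesic surface $S_0$ that glues to $\bigsqcup_i P_i$ along $L_0$ in the almost $1$--shifting fashion. The union $S_0\cup\bigl(\bigsqcup_i P_i\bigr)$ is the desired $S$: the pants of $S$ adjacent to $l_i$ are the $P_i$, whose feet on $l_i$ are $\frac{\epsilon}{R}$--close to $\vec v_i$. The only additional point is that the ``if'' direction must be proved in this slightly stronger form permitting prescribed feet on the boundary curves, which is again what \cite{LM} provides.
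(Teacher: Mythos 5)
First, note that the paper does not prove this statement at all: Theorem~\ref{bounding} is quoted, ``stated in the following form in \cite{Sun2} and proved in \cite{LM}.'' Your outline is in fact the route taken in \cite{LM} (and recalled in \cite{Sun1,Sun2}): define $\sigma(L)$ as the $H_1(\SO(3);\Integral)\cong\Integral_2$--component of $\Phi(L)$ for null-homologous $L$, get additivity and nontriviality from $\Phi$ being an isomorphism, get the ``only if'' direction because a pasted nearly totally geodesic subsurface exhibits $L\in\partial\Integral{\bold \Pi}_{R,\epsilon}$, and get the ``if'' direction by converting the algebraic panted cobordism supplied by injectivity of $\Phi$ into an actual almost $1$--shifted gluing. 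So in spirit you are reproducing the cited proof rather than proposing a new one, and your deferral of the assembly and of the prescribed-feet (relative) version to \cite{LM} mirrors what the paper itself does.

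One step of your sketch is, however, wrong as literally written. For the interior matchings you assert that along each interior good curve $\gamma$ the two feet-sets of your specific collection $\mathcal P$ ``are each $\frac{\epsilon}{R}$--close to the same scaled Lebesgue measure on $N^1(\sqrt{\gamma})$.'' Equidistribution (\cite[Theorem 3.4]{KM}) concerns the feet of \emph{all} $(R,\epsilon)$--good pants with a cuff on $\gamma$; the finite collection produced by solving $\partial c=L$ has no such property (it could consist of a single pair of pants on each side), and Hall's marriage argument with the almost $1$--shift condition cannot be run on it directly. The standard repair, which is the actual content of the construction in \cite{KM,LM,Sun1}, is to first augment $c$ by a boundary-neutral correction --- e.g.\ adding, for a large integer $K$, $K$ copies of every good pair of pants together with $K$ copies of its orientation reversal, so the boundary in $\Integral{\bold \Gamma}_{R,\epsilon}$ is unchanged while the resulting multiset of feet on every curve becomes nearly evenly distributed --- and only then apply the matching argument. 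Your sketch omits this augmentation; since you ultimately cite \cite{LM} for the assembly (and for the relative version with prescribed boundary feet, which also keeps your ``moreover'' argument from being circular), the omission is not fatal to the plan, but the intermediate claim as stated would fail.
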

	
	\subsection{An enhanced connection principle}
	The following enhanced version of connection principle is proved in \cite{Liu},
	which plays an important role in this paper.
	
	\begin{theorem}\label{connectionprinciple}
		Let $M$ be an oriented closed hyperbolic $3$-manifold.
		Let ${\bold p}= (\vec{t}_p, \vec{n}_p,\vec{t}_p\times\vec{n}_p)$
		and ${\bold q}= (\vec{t}_q, \vec{n}_q,\vec{t}_q\times\vec{n}_q)$ in $\SO(M)$
		be a pair of special orthonormal frames at points $p$ and $q$ of $M$,
		respectively.
		Let $\Xi\in H_1(\SO(M),\,{\bold p}\cup {\bold q};\,\Integral)$
		be a relative homology class with boundary $\partial_*\Xi=[{\bold q}]-[{\bold p}]$.

		Given any positive constant $\delta$, and for every sufficiently large positive constant $L$ with respect to $M$, $\Xi$, and $\delta$,
		there exists a geodesic path $s$ in $M$ from $p$ to $q$ with the following properties.
		\begin{itemize}
			\item The length of $s$ is $\frac{\delta}{L}$--close to L.
			The initial direction of $s$ is $\frac{\delta}{L}$--close to $\vec{t}_p$ and the terminal direction of $s$ is $\frac{\delta}{L}$--close to $\vec{t}_q$.
			\item The parallel transport from $p$ to $q$ along $s$ takes ${\bold p}$ to a frame ${\bold q'}$
			which is $\frac{\delta}{L}$--close to ${\bold q}$,
			and there exists a unique shortest path in $\SO(M)|_q$ between ${\bold q'}$ and ${\bold q}$.
			\item Denote by $\hat{s}$ the path which is
			the concatenation of the parallel-transport path from ${\bold p}$ to ${\bold q'}$
			with the shortest path from ${\bold q'}$ to ${\bold q}$.
			The relative homology class represented by $\hat{s}\in \pi_1(\SO(M),\,{\bold p}\cup{\bold q})$ equals $\Xi$.
		\end{itemize}
	\end{theorem}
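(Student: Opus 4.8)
The plan is to run the dynamical mechanism behind the classical connection principle of Kahn--Markovic \cite{KM} --- exponential mixing of the frame flow --- but over the maximal abelian cover of $\SO(M)$, so that the relative homology class of the lift can be prescribed. Identify $\SO(M)$ with $\Gamma\backslash\mathrm{PSL}_2(\Complex)$, where $M=\Gamma\backslash\mathbb H^3$; then the geodesic flow lifts to the frame flow $\{g_t\}$ (a one-parameter right translation), which on the compact quotient $\SO(M)$ has a spectral gap and hence mixes exponentially. For a geodesic $s$ in $M$ from $p$ to $q$, the parallel transport of $\mathbf p$ along $s$ is precisely the frame-flow trajectory $t\mapsto g_t(\mathbf p)$, and $\hat s$ is obtained from it by appending the $\tfrac{\delta}{L}$-short correction at $q$. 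Let $\widehat{\SO(M)}\to\SO(M)$ be the cover with deck group $A:=H_1(\SO(M);\Integral)$, a finitely generated abelian group, fix a lift $\widehat{\mathbf p}$, and observe that the trajectory $t\mapsto g_t(\mathbf p)$ lifts to one ending in a well-defined sheet $a\cdot\widehat{\mathbf q'}$, $a\in A$; since the correction path is short it is homologically trivial, so $[\hat s]$ equals the image of $a$ in $H_1(\SO(M),\mathbf p\cup\mathbf q;\Integral)$ under the inclusion from the long exact sequence, shifted by a fixed reference class. As $\partial_*\Xi=[\mathbf q]-[\mathbf p]$, the admissible $\Xi$ form a single coset of $\iota_*A$, so it is enough to realize every sheet $a\in A$.

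Concretely, I would fix small bump functions concentrated near $\mathbf p$ and near $\mathbf q$, cut down to the $\tfrac{\delta}{L}$-cones about the prescribed initial and terminal directions and to the length window $(L-\tfrac{\delta}{L},L+\tfrac{\delta}{L})$ (so that any geodesic counted automatically satisfies all the metric requirements, including $g_{\mathrm{len}(s)}(\mathbf p)$ being $\tfrac{\delta}{L}$-close to $\mathbf q$), and let $N_a(L)$ be the number of geodesics from $p$ to $q$ with these properties whose parallel-transport lift lands in sheet $a$. By Fourier analysis on $A$, with $\widehat A\cong(S^1)^r\times(\text{finite})$ its Pontryagin dual, one writes
$$N_a(L)=\int_{\widehat A}\overline{\chi(a)}\,I_\chi(L)\,d\chi ,$$
where $I_\chi(L)$ is a twisted correlation integral for the frame flow twisted by the unitary character $\chi$ --- equivalently, a matrix coefficient of $g_L$ acting on sections of the flat Hermitian line bundle over $\SO(M)$ determined by $\chi$.

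The estimates split three ways. For $\chi$ trivial, ordinary exponential mixing of the frame flow makes $I_1(L)$ positive and of size $e^{hL}$ up to polynomial factors in $L$, where $h>0$ is the entropy of the flow: this is the main term. For $\chi$ bounded away from the trivial character, the $\chi$-twisted flow has no invariant sections and retains a spectral gap, so $I_\chi(L)$ is exponentially smaller and the total contribution of such $\chi$ is negligible. The delicate range, present exactly when $r=\operatorname{rank}A>0$, is a shrinking neighborhood of the trivial character, where one Taylor-expands the leading eigenvalue of the twisted transfer operator; its Hessian is a positive-definite covariance form, and the resulting Gaussian integral shows that this part of $\int_{\widehat A}\overline{\chi(a)}\,I_\chi(L)\,d\chi$ equals $L^{-r/2}e^{hL}$ times a positive constant up to lower order, \emph{uniformly in $a$ provided $L\gtrsim\|a\|^{2}$}. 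Hence $N_a(L)\to\infty$, so $N_a(L)>0$ for every $L$ large relative to $M$, $a$, and $\delta$; choosing such a geodesic $s$ and assembling $\hat s$ as in the statement gives $[\hat s]=\Xi$.

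I expect the main obstacle to be precisely the near-trivial-character analysis: this is a local central limit theorem for the homological winding of long frame-flow segments, with all constants uniform in the endpoints $\mathbf p,\mathbf q$, and it requires the perturbation theory of twisted transfer operators in the style of Phillips--Sarnak, Babillot, and Pollicott--Sharp, adapted from the geodesic flow to the frame flow. When $H_1(\SO(M);\Integral)$ is finite ($r=0$) this step is vacuous and the argument reduces to exponential mixing plus Fourier inversion over a finite group. The remaining points --- that the short correction path is homologically trivial, that the frame-flow trajectory is the parallel-transport path, and that the sheet label indeed records $[\hat s]$ --- are routine verifications from the definitions.
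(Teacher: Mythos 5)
First, a bookkeeping point: this paper does not prove Theorem \ref{connectionprinciple} at all; it is quoted from \cite{Liu}, so your attempt has to be measured against the argument of the cited source. That argument stays inside the good-pants framework: it bootstraps from the ordinary connection principle supplied by exponential mixing of the frame flow, and obtains the prescribed relative class by explicit homological corrections --- roughly, one concatenates connectors (built by the ordinary principle, with aligned directions so all bending angles are tiny) with controlled windings around closed geodesics representing generators of $H_1(\SO(M);\Integral)\cong H_1(M;\Integral)\oplus\Integral_2$ and with a fiber rotation for the $\Integral_2$ factor, and then straightens; since straightening preserves the homotopy class rel endpoints, the uncontrollable ``defects'' of the auxiliary connectors are absorbed once and for all into a fixed base class, and each insertion shifts that class by a prescribed element, so no equidistribution-in-homology statement is ever needed. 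Your proposal is a genuinely different and much heavier route: you set out to prove a counting/equidistribution theorem for geodesic arcs in a prescribed relative homology class, via Fourier analysis over the character group of $A=H_1(\SO(M);\Integral)$ and a local central limit theorem for the homological winding of the frame flow. Your reductions (the coset description of admissible $\Xi$, the sheet bookkeeping on the $A$-cover, the allowance $L\gtrsim\|a\|^2$ matching the dependence of $L$ on $\Xi$) are fine, and if completed this route would yield strictly finer information than the theorem asks for.

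The genuine gap is exactly the step you flag as the main obstacle, and it is not a routine citation. Babillot and Pollicott--Sharp type transfer-operator arguments are written for Anosov (geodesic) flows; the frame flow is a compact group extension with neutral fiber directions, so the symbolic-dynamics machinery does not apply verbatim, and in the homogeneous formulation the near-trivial-character regime requires uniform effective decay of matrix coefficients on $L^2(\Gamma\backslash\mathrm{PSL}_2(\Complex),\chi)$ as $\chi\to 1$, precisely where the spectral gap degenerates; one needs a Phillips--Sarnak style second-order expansion of the bottom of the spectrum of the $\chi$-twisted Laplacian, its transfer from the geodesic flow to the frame flow, and uniformity in the endpoints $\mathbf{p},\mathbf{q}$. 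In addition, the statement (as used in this paper, cf.\ Section \ref{epsilonbyL}) demands windows of size $\frac{\delta}{L}$, shrinking polynomially in $L$, whereas your correlation integrals are set up with test functions of fixed size; every estimate --- the main term, the characters bounded away from the identity, and the Gaussian regime --- must therefore be effective with polynomial dependence on the Sobolev norms of $L$-dependent test functions, and this has to be carried through the twisted/local-limit analysis as well, not only through ordinary mixing. So while the program is believable, as written it asserts rather than proves its central analytic step, which the softer argument of \cite{Liu} avoids entirely; the remaining parts of your outline are correct but carry essentially none of the difficulty.
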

	
	For simplicity, if the parallel transportation of ${\bold p}$ along $s$ is a frame ${\bold q}'$
	which is very close ($\frac{\delta}{L}$--close) to a frame ${\bold q}$ based at the same point as ${\bold q}'$,
	we introduce the notation
		$$[{\bold p}\xrightarrow{s} {\bold q}]$$
	to denote the relative homology class in $H_1(\SO(M),{\bold p}\cup{\bold q};\Integral)$
	represented by the concatenation of the parallel-transport path from ${\bold p}$ to ${\bold q'}$ along $s$
	with the shortest path in $SO(M)|_q$ from ${\bold q'}$ to ${\bold q}$.
	
	\subsection{Finer estimates}\label{epsilonbyL}
		As applied to \cite{Sun1,Sun2,Liu}, (see \cite[Remark D]{Sun2} for example,)
		the $\epsilon$ (or $\delta$) of Theorem \ref{homology} and Theorem \ref{bounding} can
		be replaced by $\frac{\epsilon}{R}$ (or $\frac{\delta}L$),
		since the exponential mixing rate of
		the frame flow beats any polynomial rate.
		In this paper, we also apply the refined version of these theorems
		so as to invoke established estimates from \cite{Sun1,Sun2,Liu}.

\section{Virtual 1-domination between hyperbolic 3-manifolds}\label{Sec-hypCase}
	To emphasize the geometric information that we extract from the target manifold,
	we prove in this section
	the following core case of Theorems \ref{main} and \ref{contraction},
	leaving the derivation of the general case to next section.

	\begin{theorem}\label{virtual_one_domination_hyperbolic}
		For any pair of closed oriented hyperbolic $3$-manifolds $M$ and $N$, there exists a finite cover $M'$ of $M$ and a degree-$1$	map $f\colon M'\rightarrow N$.
		When a positive constant $\kappa$ is given, it can be required in addition that $f$ is $\kappa$-Lipschitz.
	\end{theorem}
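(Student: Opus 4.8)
The plan is to follow the broad strategy of \cite{Sun2}: using good pants constructions, build a $\pi_1$-injectively immersed and quasiconvex $2$-complex $Y$ in $M$ that is combinatorially modeled on the $2$-skeleton of a triangulation of $N$; invoke the LERF property of $\pi_1(M)$ (Agol and Wise, \cite{Ag,Wi}) to realize $Y$ as an embedded subcomplex in a suitable finite cover $M'$ of $M$; and then extend the natural collapse $Y\to N^{(2)}$ to a degree-$1$ map $M'\to N$ by filling in over the complementary regions. The point of departure from the degree-$2$ construction of \cite{Sun2} is that, instead of passing to a double in order to annihilate the $\Integral_2$--obstructions $\sigma$, I would use the enhanced connection principle (Theorem \ref{connectionprinciple}) while laying down the $1$-cells, so as to keep every relevant $\sigma$ equal to $\bar{0}$ as the construction proceeds; the bookkeeping is accordingly carried out in $H_1(\SO(M);\Integral)$ rather than merely in $H_1(M;\Integral)$.

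The first step is to fix a geometric triangulation $\mathcal{T}$ of $N$ and, for a small $\epsilon>0$ and a large $R>0$, to construct $Y\looparrowright M$ out of $(R,\epsilon)$-good pants glued along $(R,\epsilon)$-good curves, following the incidence combinatorics of $\mathcal{T}$ (or of a subdivision of it). The decisive move is that when one chooses the geodesic segments connecting the frames that serve as the $0$-skeleton, Theorem \ref{connectionprinciple} allows prescribing their relative homology classes in $H_1(\SO(M),\,\cdot\,;\,\Integral)$; this freedom can be used to arrange, for every $2$-cell simultaneously, that the good multicurve bounding it is null-homologous in $M$ \emph{and} has $\sigma=\bar{0}$, so that by the refined form of Theorem \ref{bounding} each such multicurve bounds a genuine nearly totally geodesic subsurface, with feet matched along the shared curves. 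Assembling these subsurfaces yields $Y$ together with the collapse $Y\to N^{(2)}$.

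The second step is to check that $Y$ is $\pi_1$-injective and quasiconvex in $\pi_1(M)$. This is the technical bulk of \cite{Sun2}, and its estimates carry over to the present setting with only routine modifications, which I would make explicit while referring to \cite[Section 4]{Sun2} for the remainder. Granting it, $\pi_1(Y)$ is geometrically finite and hence separable in $\pi_1(M)$, so a finite cover $M'\to M$ exists in which the immersion of $Y$ lifts to an embedding $Y\hookrightarrow M'$.

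The third step is to promote the collapse to a degree-$1$ map $f\colon M'\to N$. Take a thin regular neighborhood $\mathcal{N}(Y)\subset M'$, extend the collapse to a map $\mathcal{N}(Y)\to N$ whose image lies in a regular neighborhood of $N^{(2)}$, and for each component $W$ of the complement of $\mathcal{N}(Y)$ in $M'$ extend the induced boundary map $\partial W\to\partial\Delta\cong S^2$ over $W$; this is possible because the corresponding tetrahedron $\Delta$ is contractible. The algebraic number of preimages of an interior point of $\Delta$ is then the sum, over those $W$ mapped into $\Delta$, of the degrees of these boundary maps, and the homological control maintained in $\SO(M)$ during the construction of $Y$ is precisely what forces this number to be $+1$ for every $\Delta$; thus $\deg f=1$. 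I expect this coupled requirement---keeping all the $\sigma$ trivial while at the same time pinning down the complementary boundary degrees, controlling only the relative classes of frame paths in $H_1(\SO(M);\Integral)$---to be the main obstacle, and it is exactly here that the enhanced connection principle of \cite{Liu} is indispensable. Finally, the metric refinement follows by taking $R$ large, so that the good pants pieces, the neighborhood $\mathcal{N}(Y)$, and the complementary regions are all geometrically large compared with the fixed $N$; the collapse and the extensions over the tetrahedra can then be taken $\kappa$-Lipschitz, the estimates being routine and in the spirit of \cite[Section 4]{Sun2} and \cite{Liu}.
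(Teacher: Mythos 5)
Your outline follows the same overall route as the paper (good pants construction of a $2$-complex modeled on a triangulation of $N$, the enhanced connection principle for the $1$-cells, LERF plus an extension of the collapse map), but it leaves the central step unproved: you assert that the freedom of Theorem \ref{connectionprinciple} "can be used to arrange, for every $2$-cell simultaneously" that the bounding multicurves are null-homologous with $\sigma=\bar 0$. That simultaneity is exactly the difficulty. Each edge is shared by several $2$-simplices, so its frame-path class cannot be chosen per triangle; changing it changes the contribution to every triangle containing it, and what you are really asking for is a solution of a system of conditions (over $H_1(M;\Integral)$ and over the fiber $\Integral_2$) indexed by the $2$-simplices, in unknowns indexed by the edges --- a cochain equation whose solvability is not at all automatic and which you neither formulate nor solve; you only flag it as "the main obstacle." The paper resolves it not by an abstract choice but by importing the solution from the geometry of $N$: one fixes trivializations of $\SO(N)$ and $\SO(M)$ and an embedding of $N^{(1)}$ realizing a homomorphism on $H_1$, obtains a bundle morphism ${\bold i}_1$ and the induced homological instruction ${\bold j}_*$ (Proposition \ref{instruction}), prescribes each edge's frame-path class to be the ${\bold j}_*$--image of the corresponding class in $\SO(N)$ (checking in Lemma \ref{welldefine} that one choice per edge serves all adjacent simplices), and then proves $\sigma(\gamma_{ijk})=0$ in Lemma \ref{noObstruction} by establishing the identity that the sum of the frame paths around a geodesic triangle in $\SO(N)$ equals the fiber class $c$ (a continuation argument shrinking the simplex), pushing it through ${\bold j}_*$, and comparing with the canonical lift of $\gamma_{ijk}$ across the nearly totally geodesic annulus, with the corner rotations and inefficiencies $I(\theta_{ijk})$ accounted for explicitly. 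Without some such mechanism your construction cannot get off the ground, since Theorem \ref{bounding} is only applicable once every $\sigma$ vanishes.

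A secondary point: you attribute the degree being $+1$ to "the homological control maintained in $\SO(M)$" at the stage of extending over the complementary regions. That is misplaced. The homological control is what permits each $2$-simplex of $N$ to be matched by a single surface piece whose boundary traverses the corresponding triangle of edges exactly once (rather than twice, as in the degree-$2$ construction of \cite{Sun2}); once $Z$ has this combinatorial structure and the cover is chosen so that a convex neighborhood homeomorphic to the handle model $\mathcal{Z}$ embeds, the degree-$1$ property is a purely topological consequence of the proper degree-$1$ collapse $\mathcal{Z}\to\mathcal{N}^{(2)}$ and the extension over the $3$-handles (Lemma \ref{one_domination}); no further bookkeeping in $\SO(M)$ enters there, and your proposed count of boundary degrees of complementary regions would need the convex-core description of Lemma \ref{injectivity} (or an equivalent) to be justified. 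The remaining ingredients of your sketch --- $\pi_1$-injectivity and quasiconvexity deferred to \cite[Section 4]{Sun2}, separability, and the Lipschitz refinement via a fine triangulation of $N$ and long edges in $M$ --- do match the paper.
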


	To sketch the idea, let us fix a geometric triangulation of $N$, namely, a realization of $N$
	by gluing finitely many (ordinary, geodesic) hyperbolic tetrahedra.
	
	We want to construct a $\pi_1$--injectively immersed quasiconvex $2$-subcomplex $Z\looparrowright M$
	which homologically looks like the $2$-skeleton of $N$.
	To be more precise, we want $Z$ to be homeomorphic to the $2$-skeleton of $N$
	but	with the $2$-simplices replaced by $1$-holed surfaces.
	To immerse the analogue $1$-skeleton $Z^1$ of $Z$, we may first embed the vertices into $M$,
	and then use the connection principle to immerse the edges
	to be long geodesics between the embedded vertices.
	However, in this way,
	the boundary cycles of $2$-simplices may fail to be homologically trivial in $M$;
	even if they are, the closed geodesics of their free-homotopy classes
	may fail to bound immersed panted subsurfaces,
	as the $\Integral_2$--valued obstructions may be nontrivial.
	So we would not be able to proceed by good pants constructions to obtain a desired $2$-complex $Z$.
	To design the immersion of $Z^1$ more carefully,
	we need to invoke the enhanced connection principle to control the relative homology classes
	of the constructed long geodesics and (certain) frame paths associated to them.
	Correct choices of those relative homology classes comes from a homomorphism of the form
	$H_1(\SO(N),F_N;\Integral)\to H_1(\SO(M),F_M;\Integral)$, where $F_N$ and $F_M$
	are certain finite collection of frames over the $0$-skeleton.
	This homomorphism is referred to as our homological instruction (Proposition \ref{instruction}).
	The instruction about the local shape of $Z^1$ near its $0$-skeleton
	comes from the shape of $1$-skeleton of $N$ near its vertices.
	More precisely, angles between edges at vertices of $Z^1$,
	are approximately the corresponding angles in $N$,
	and parallel transport of frames along edges
	are approximately the same as that of $N$ accordingly.
	The geometric design facilitates our calculation,
	so we can check that the $\Integral_2$--valued obstructions indeed vanish for
	the expected boundary cycles in $Z^1$.
	Eventually, we are able to extend $Z^1$
	to be the claimed $Z$ using good pants constructions.
	
	After the construction of $Z$, the virtual $1$-domination can be constructed in a similar way as done by \cite{Sun2}.
	In fact, the geometric design also guarantees the quasiconvexity of $Z$ and the shape of its convex core.
	We pass to a finite cover $M'$ of $M$ into which $Z$ lifts to be an embedding, by the LERF property of $\pi_1(M)$.
	The claimed $1$-domination $M'\to N$ can be naturally defined restricted to $Z$,
	and can be extended over $M'$ almost automatically.
	
	The rest of this section is devoted to the proof of Theorem \ref{virtual_one_domination_hyperbolic}.

	\subsection{Setup}
	Suppose that $M$ and $N$ are a pair of closed oriented hyperbolic $3$-manifolds.
	We denote the special orthonormal frame bundle of $M$ and $N$ as $\SO(M)$ and $\SO(N)$ accordingly.
	
	\subsubsection{Intitial data}
	Modulo good pants constructions and lifting to finite covers,
	our construction depends (only) on the following choice of auxiliary data:
	\begin{itemize}
		\item A geometric triangulation of $N$:
		That is, a division of the target hyperbolic 3-manifold $N$
		into a simplicial complex of which the simplices are all totally geodesic.
		When a positive constant $\kappa$ is given,
		we require that the length of edges are at most $10^{-2}\kappa$,
		or at most $10^{-2}$ if $\kappa$ exceeds $1$.
		%The vertex set of the triangulation is denoted by:
			%$$V_N\subset N.$$
		\item A homological lift, by which we mean a homomorphism of homology:
			$$i_{*}\colon H_1(N;\Integral)\to H_1(M;\Integral),$$
		together with an embedding of the $1$-skeleton:
			$$i_1\colon N^{(1)}\to M.$$
		We require $i_1$ to realize $i_*$ in the sense that
		the induced homomorphism $i_{1*}\colon H_1(N^{(1)};\Integral)\to H_1(M;\Integral)$
		factors through $i_*$.		
		\item A pair of trivializations of the $\SO(3)$--principal bundles:
			$$t_N\colon \SO(N)\to N\times \SO(3)$$
		and
			$$t_M\colon \SO(M)\to M\times \SO(3).$$
	\end{itemize}

	A collection of initial data can be easily prepared as required.
	Specifically, a geometric triangulation can be constructed by the following way:
	First take a Dirichlet polyhedron $P$ in the universal cover $\mathbb{H}^3$ of $N$.
	The combinatorial barycentric subdivision of $P$ can be realized
	as a geometric triangulation of $P$, and the barycenters can be chosen to match up under side-pairing.
	Then we have an induced geometric triangulation of $N$ as claimed.
	By iterating barycentric subdivisions, we can bound the edge length by any required
	positive constant $10^{-2}\kappa$.
	A default option for a homological lift could be
	the trivial homomorphism together with an embedding of the $1$-skeleton of $N$
	into a topologically embedded ball of $M$.
	Choosing trivializations of the frame bundles
	amounts to finding a field of frames
	over each base manifold, which is always possible as the tangent bundle
	of any orientable closed $3$-manifold is trivial.

	\subsubsection{Geometric notations}
	Having chosen the geometric triangulation of $N$,
	we enumerate the vertices of $N$ by
		$$V_N=\{n_1,n_2,\cdots,n_l\}.$$
	When there is an edge between $n_i$ and $n_j$,
	denote the oriented edge from $n_i$ to $n_j$ by $e_{ij}$,
	so $e_{ji}$ is the orientation-reversal of the same edge.
	When there is a $2$-simplex spanned by a triple of vertices $(n_i, n_j, n_k)$,
	denote the marked $2$-simplex by $\Delta_{ijk}$.
	
	Associated to any marked $2$-simplex $\Delta_{ijk}$, there is a frame
	${\bold F}_{ijk}$ based at $n_i$ defined as follows.
	For any oriented edge $e_{ij}$,
	denote by $\vec{v}_{ij}$ the unit tangent vector of $e_{ij}$ based at $n_i$ pointing to $n_j$.
	For any marked $2$-simplex $\Delta_{ijk}$, denote by
		$$\vec{n}_{ijk}=\frac{\vec{v}_{ij}\times \vec{v}_{ik}}{|\vec{v}_{ij}\times \vec{v}_{ik}|}$$
	the distinguished normal vector of $\Delta_{ijk}$ at $n_i$.
	The cross product does not vanish since $\Delta_{ijk}$ is totally geodesic in $N$.
	Then the frame associated to $\Delta_{ijk}$ based at $n_i$ is defined by
		$${\bold F}_{ijk}=(\vec{v}_{ij},\vec{n}_{ijk}).$$
	In this paper,
	we only write the first two vectors for a frame, such as
	${\bold F}=(\vec{v},\vec{n})$,
	implicitly meaning that the third vector is $\vec{v}\times \vec{n}$.
	We also introduce the notation
		$$-{\bold F}_{ijk}=(-\vec{v}_{ij},-\vec{n}_{ijk}).$$
	Note that if we write down all three vectors components in a frame,
	$-{\bold F}_{ijk}$ only changes sign for the first two vectors from that of ${\bold F}_{ijk}$,
	not for the third.
	Observe that $-{\bold F}_{jik}$ is the parallel transport of ${\bold F}_{ijk}$
	from $n_i$ to $n_j$ along $e_{ij}$.	
	Collectively we write
	$$F_N=\{\pm{\bold F}_{ijk}\colon \Delta_{ijk}\textrm{ a marked 2-simplex of }N\}
	\,\subset\,\SO(N)|_{V_N}.$$
	%Here we also need $-{\bold F}_{ijk}$ since the parallel transportation of ${\bold F}_{ijk}=(\vec{v}_{ij},\vec{n}_{ijk})$
	%along $e_{ij}$ is $-{\bold F}_{jik}=(-\vec{v}_{ji},-\vec{n}_{jik})$. Clearly, $F_N$ is a subset of $SO(M)|_{V_N}$.
	%
	%Since the tangent bundle of any orientable $3$-manifold is trivial, we fix two trivializations
	%$t_N\colon SO(N)\to N\times SO(3)$ and $t_M\colon SO(M)\to M\times SO(3)$, such that $t_N$ and $t_M$
	%both commute with the right $SO(3)$-actions on the frame bundles.
		
	With the chosen homological lift and the trivialization of frame bundles, we define a morphism
	of $\SO(3)$--principal bundles:	
		$${\bold i}_1\colon \SO(N)|_{N^{(1)}}\to \SO(M)$$
	over a base map	$i_1\colon N^{(1)}\to M$
	by the following commutative diagram:
	\begin{equation}\label{define_bold_i_1}
		\begin{CD}
			\SO(N)|_{N^{(1)}} @> {{\bold i}_1}>> \SO(M)\\
			@V t_N VV @VV t_M V\\
			N^{(1)}\times \SO(3) @>i_1\times \mathrm{id}>> M\times \SO(3).
		\end{CD}
	\end{equation}
		
	Via the bundle morphism ${\bold i}_1$, we define objects associated to $M$ accordingly, namely,
		$$m_k=i_1(n_k)$$
	and
		$${\bold F}_{ijk}^M=(\vec{v}_{ij}^M,\vec{n}_{ijk}^M)=\mathbf{i}_1({\bold F}_{ijk}).$$
	Collectively we write	
		$$V_M=\{m_1,m_2,\cdots,m_l\}\subset M,$$
	and
		$$F_M=\mathbf{i}_1(F_N)\,\subset\,\SO(M)|_{V_M}.$$
	Observe that since $\mathbf{i}_1$ commutes with the right $\SO(3)$--multiplication,
	$-{\bold F}_{ijk}^M$ equals $\mathbf{i}_1(-{\bold F}_{ijk})$.
	
	Denote by $\theta_{ijk}$ the external angle of the geodesic $2$-simplex $\Delta_{ijk}$ in $N$
	at the vertex $n_i$. In other words, it is the geometric angle formed by
	the vectors $-\vec{v}_{ij}$ and $\vec{v}_{ik}$, valued in the interval $(0,\pi)$.
	Because there are only finitely many $2$-simplices $\Delta_{ijk}$,
	there is a uniform bound constant
		$$\phi_0=\min\{\theta_{ijk},\pi-\theta_{ijk}\colon \Delta_{ijk}\textrm{ any 2-simplex of }N\}\in(0,\pi)$$
	such that
		$$\theta_{ijk}\in[\phi_0,\pi-\phi_0].$$	
	
	\subsubsection{Homological instruction}
	Our construction follows the homological instruction given by the homomorphism
	declared by the following proposition.		
	
	\begin{proposition}\label{instruction}
		There exists a homomorphism:
			$${\bold j}_*\colon H_1(\SO(N),F_N;\,\Integral)\to H_1(\SO(M),F_M;\,\Integral),$$
		which is uniquely induced by the bundle morphism ${\bold i}_1$.
		Moreover, the following diagrams of homomorphisms are commutative:
		\begin{equation}\label{endpoint}
			\begin{CD}
			H_1(\SO(N),F_N;\,\Integral) @>\partial_*>> H_0(F_N;\,\Integral)\\
			@V{\bold j}_*VV @VV{\bold i}_{1*}V\\
			H_1(\SO(M),F_M;\,\Integral) @>\partial_*>> H_0(F_M;\,\Integral),
			\end{CD}
		\end{equation}
		and
		\begin{equation}\label{subframe}
			\begin{CD}
			H_1(\SO(N)|_{V_N},F_N;\,\Integral) @>\mathrm{incl}_*>> H_1(\SO(N),F_N;\,\Integral)\\
			@V{\bold i}_{1*}VV @VV{\bold j}_*V\\
			H_1(\SO(M)|_{V_M},F_M;\,\Integral) @>\mathrm{incl}_*>> H_1(\SO(M),F_M;\,\Integral).
			\end{CD}
		\end{equation}
		The horizontal homomorphisms are those of the long exact sequences for pairs
		and the vertical homomorphisms involving ${\bold i}_1$ are induced by restrictions.
	\end{proposition}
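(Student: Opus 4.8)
The plan is to construct $\mathbf{j}_*$ directly at the level of chains using the bundle morphism $\mathbf{i}_1\colon \SO(N)|_{N^{(1)}}\to \SO(M)$, and then verify that the two diagrams commute by naturality. First I would observe that $\mathbf{i}_1$ carries the subspace pair $(\SO(N)|_{N^{(1)}}, F_N)$ into the pair $(\SO(M), F_M)$: indeed by definition $F_M = \mathbf{i}_1(F_N)$, and each frame of $F_N$ lies over a vertex $n_k\in N^{(1)}$, which is mapped to $m_k = i_1(n_k)\in M$. Hence $\mathbf{i}_1$ induces a homomorphism on relative singular homology $(\mathbf{i}_1)_*\colon H_1(\SO(N)|_{N^{(1)}}, F_N;\Integral)\to H_1(\SO(M), F_M;\Integral)$. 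The point requiring attention is that the source of $\mathbf{j}_*$ is $H_1(\SO(N),F_N;\Integral)$, with the \emph{full} frame bundle of $N$ rather than its restriction over $N^{(1)}$.

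To address this, the key step is to show that the inclusion-induced map
\begin{equation}\label{eq:surjective-restriction}
	\mathrm{incl}_*\colon H_1(\SO(N)|_{N^{(1)}}, F_N;\Integral)\to H_1(\SO(N), F_N;\Integral)
\end{equation}
is surjective. This follows from the long exact sequences of the triples $F_N\subset \SO(N)|_{N^{(1)}}\subset \SO(N)$: it suffices that $H_1(\SO(N),\SO(N)|_{N^{(1)}};\Integral)=0$. Since $\SO(N)\to N$ is a fiber bundle with fiber $\SO(3)$, and $N^{(1)}$ contains the $1$-skeleton of a triangulation of $N$, the pair $(N, N^{(1)})$ is $1$-connected, and excision plus the bundle structure gives that $(\SO(N), \SO(N)|_{N^{(1)}})$ is $1$-connected as well; hence the relative $H_1$ vanishes. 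To define $\mathbf{j}_*$, I would then check that $(\mathbf{i}_1)_*$ on $H_1(\SO(N)|_{N^{(1)}}, F_N;\Integral)$ descends through the surjection \eqref{eq:surjective-restriction}, i.e. that it kills the kernel. The kernel consists of classes coming from $H_2(\SO(N),\SO(N)|_{N^{(1)}};\Integral)$ via the connecting map; but again $(\SO(N),\SO(N)|_{N^{(1)}})$ being $1$-connected forces this connecting map to have image in the kernel of $\mathrm{incl}_*$, and one checks that any such relative $2$-cycle can be pushed (rel $F_N$) into $\SO(N)|_{N^{(1)}}$ after applying $\mathbf{i}_1$ up to boundary — in fact the cleanest route is to note $\mathrm{Ker}(\mathrm{incl}_*)$ is generated by boundaries of singular $2$-simplices whose images in $\SO(N)$ are homotopic rel endpoints into $\SO(N)|_{N^{(1)}}$, and $\mathbf{i}_1$ of such a boundary is already a boundary in $\SO(M)$. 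Uniqueness of $\mathbf{j}_*$ is then immediate from surjectivity of \eqref{eq:surjective-restriction}.

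Granting the existence of $\mathbf{j}_*$, the commutativity of diagram \eqref{endpoint} is pure naturality: the boundary map $\partial_*$ in the long exact sequence of a pair is natural with respect to maps of pairs, and $\mathbf{i}_1$ restricted to $F_N$ is $\mathbf{i}_{1*}$ by construction. Likewise, diagram \eqref{subframe} commutes because both composites equal the map induced by the map of pairs $\mathbf{i}_1|\colon (\SO(N)|_{V_N}, F_N)\to (\SO(M)|_{V_M}, F_M)$ followed by the relevant inclusions, and the inclusion $(\SO(N)|_{V_N}, F_N)\hookrightarrow (\SO(N)|_{N^{(1)}}, F_N)\hookrightarrow (\SO(N), F_N)$ is compatible with $\mathbf{i}_1$ by functoriality of singular homology. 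In summary, everything reduces to (a) the $1$-connectedness of the pair $(\SO(N), \SO(N)|_{N^{(1)}})$, which makes $\mathrm{incl}_*$ in \eqref{eq:surjective-restriction} an isomorphism in the relevant range, and (b) functoriality of relative homology and of connecting homomorphisms.

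The main obstacle I anticipate is (a): one must argue carefully that restricting the $\SO(3)$-bundle over $N$ to the $1$-skeleton $N^{(1)}$ does not lose any $H_1$ information relative to the vertex set $F_N$. The subtlety is that $\SO(3)$ has $\pi_1 = \Integral_2$, so the total space $\SO(N)$ genuinely has more $H_1$ than its base; but the restriction to $N^{(1)}$ already sees the full fiber $\Integral_2$ (since $N^{(1)}$ contains $1$-cells along which the fiber is carried), so the comparison $(\SO(N)|_{N^{(1)}}, F_N)\to (\SO(N), F_N)$ does not change $H_1$. Making this precise via a relative Mayer--Vietoris or a spectral-sequence argument for the bundle, keeping track of the $F_N$ basepoints, is the one genuinely technical point; the rest is formal.
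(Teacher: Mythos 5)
Your overall skeleton is the same as the paper's: define $\mathbf{i}_{1*}$ on $H_1(\SO(N)|_{N^{(1)}},F_N;\Integral)$, note that $\mathrm{incl}_*\colon H_1(\SO(N)|_{N^{(1)}},F_N;\Integral)\to H_1(\SO(N),F_N;\Integral)$ is onto because the pair $(\SO(N),\SO(N)|_{N^{(1)}})$ has vanishing $H_1$, and descend through this surjection using the long exact sequence of the triple, with uniqueness and the two squares following from surjectivity and naturality. But the one step that carries the actual content --- showing that $\mathbf{i}_{1*}$ vanishes on the kernel of $\mathrm{incl}_*$, i.e.\ on $\partial_*H_2(\SO(N),\SO(N)|_{N^{(1)}};\Integral)$ --- is not proved in your write-up. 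You assert that the kernel is generated by boundaries of relative $2$-cycles and that ``$\mathbf{i}_1$ of such a boundary is already a boundary in $\SO(M)$,'' but that is exactly the claim at issue, and it is false for an arbitrary embedding $i_1\colon N^{(1)}\to M$: the kernel is generated by the $t_N$-horizontal lifts of the loops $\partial\Delta_{ijk}$, and the image of such a loop under $\mathbf{i}_1$ is the $t_M$-horizontal lift of $i_1(\partial\Delta_{ijk})$, which need not even be null-homologous in $\SO(M)$ if $i_1(\partial\Delta_{ijk})$ is homologically nontrivial in $M$. (Note also that $\mathbf{i}_1$ is only defined over $N^{(1)}$, so ``pushing the relative $2$-cycle into $\SO(N)|_{N^{(1)}}$ after applying $\mathbf{i}_1$'' does not parse; there is no bundle map on all of $\SO(N)$ to appeal to.)

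The missing ingredients are precisely the two pieces of initial data that the paper's proof uses and your argument never invokes: (i) the trivializations $t_N,t_M$, through which $\mathbf{i}_1$ is defined, guarantee that $H_2(\SO(N),\SO(N)|_{N^{(1)}};\Integral)$ is generated by $t_N$-horizontal $2$-simplices and that $\mathbf{i}_1$ carries their ($t_N$-horizontal) boundaries to $t_M$-horizontal loops, so the $H_1(\SO(3);\Integral)\cong\Integral_2$ fiber component of the image vanishes; and (ii) the homological-lift hypothesis that $i_{1*}\colon H_1(N^{(1)};\Integral)\to H_1(M;\Integral)$ factors through $i_*\colon H_1(N;\Integral)\to H_1(M;\Integral)$, which forces $i_{1*}[\partial\Delta_{ijk}]=i_*([\partial\Delta_{ijk}])=0$, so the base component also vanishes. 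Only with both does the image class die in $H_1(\SO(M);\Integral)$, hence in $H_1(\SO(M),F_M;\Integral)$, allowing the descent. Relatedly, the ``main obstacle'' you flag at the end (whether restricting to $N^{(1)}$ loses $H_1$, i.e.\ the $1$-connectedness of $(\SO(N),\SO(N)|_{N^{(1)}})$) is the routine part; the genuinely non-formal part is the well-definedness of the descended map, which is where the hypotheses on the initial data must enter.
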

	
	\begin{proof}
		%Induced by the defined map
			%$${\bold i}_{1*}\colon \SO(N)|_{N^{(1)}}\to \SO(M),$$
		%which satisfies the diagram (\ref{define_bold_i_1}),
		By definition, there is an induced homomorphism
			$${\bold i}_{1*}\colon H_1(\SO(N)|_{N^{(1)}},F_N;\,\Integral)\to H_1(\SO(M),F_M;\,\Integral).$$		
		The relative homology $H_2(\SO(N),\SO(N)|_{N^{(1)}};\,\Integral)$
		is generated by the $2$-simplices of $N$,
		or more precisely,
		by the $t_N$--horizontal $2$-simplices of $\SO(N)$.
		The assumption that $i_1\colon N^{(1)}\to M$ induces a homomorphism $i_*\colon H_1(N;\Integral)\to H_1(M;\Integral)$
		forces boundaries of $t_N$--horizontal $2$-simplices of $\SO(N)$
		to be sent via ${\bold i}_1$ to	$t_M$--horizontal boundaries in $\SO(M)$,	
		by the diagram (\ref{define_bold_i_1}).
		Since $F_M$ is $0$-dimensional,
		the restriction of ${\bold i}_{1*}$ to $\partial_* H_2(\SO(N),\SO(N)|_{N^{(1)}};\,\Integral)$ is hence trivial.
		Moreover, the inclusion induces an epimorphism $H_1(\SO(N)|_{N^{(1)}},F_N;\,\Integral)\to H_1(\SO(N),F_N;\,\Integral)$.
		It follows from the long exact sequence for the triple $(\SO(N),\SO(N)|_{N^{(1)}},F_N)$ that
		${\bold i}_{1*}$ descends to a unique homomorphism ${\bold j}_*$ as claimed.
		The commutative diagrams are immediate consequences of the naturality of the long exact sequence,
		for the map of pairs ${\bold i}_1\colon (\SO(N)|_{N^{(1)}},F_N)\to (\SO(M),F_M)$,
		and the way ${\bold j}_*$ is induced.		
	\end{proof}

	\subsection{Immersing a quasiconvex 2-complex}
	Provided with the initial data and the homological instruction,
	we construct a $\pi_1$--injectively immersed quasiconvex $2$-complex
	into the domain:
		$$Z\looparrowright M,$$
	by invoking good pants constructions.
	Assume that some small $\delta>0$ and large $L>0$ have been appropriately selected.
	These parameters are supposed to guarantee the geometry of resulting complex $Z$
	as we want, and their ranges are to be determined at the end of this subsection.
	
	The abstract $2$-complex $Z$ can be topologically decomposed into vertices, segments,
	and pieces that are homeomorphic to compact orientable surfaces with connected boundary.
	Those parts are constructed in natural correspondence with the 0-, 1-, and 2-simplices of $N$.
	In particular, the analogue $0$-skeleton, denoted by $Z^0$,
	is a discrete finite set of vertices enumerated by the same index set as $V_N$.
	So there is a naturally induced embedding of $Z^0$ into $M$ as $V_M$, denoted by:
		$$Z^0\hookrightarrow M.$$
	
	\subsubsection{Wiring in $M$ with the $1$-skeleton of $N$}	
	Corresponding to each edge $e_{ij}$ from $n_i$ to $n_j$ of $N$, we create an oriented geodesic arc $e_{ij}^M$ from $m_i$ to $m_j$
	invoking the enhanced connection principle.
	To avoid ambiguity, suppose that $i<j$, and take $k$ to be the smallest index such that $\Delta_{ijk}$ is a registered $2$-simplex of $N$.
	We only construct $e_{ij}^M$ with respect to such triples of vertices $(n_i,n_j,n_k)$, as follows.
	
	Because of the diagram (\ref{endpoint}), the relative homology class
		$${\bold j}_*[{\bold F}_{ijk}\xrightarrow{e_{ij}}(-{\bold F}_{jik})]\in H_1(\SO(M),{\bold F}_{ijk}^M\cup(-{\bold F}_{jik}^M);\,\Integral)$$
	has boundary $[-{\bold F}_{jik}^M]-[{\bold F}_{ijk}^M]$ in $H_0({\bold F}_{ijk}^M\cup(-{\bold F}_{jik}^M);\,\Integral)$,
	which can be canonically identified as a submodule of $H_0(F_M;\,\Integral)$.
	By the enhanced connection principle (Theorem \ref{connectionprinciple}),
	we construct an oriented geodesic arc $e_{ij}^{M}$ in $M$ from $m_i$ to $m_j$
	with the following description:
	\begin{itemize}
	\item The length of $e_{ij}^M$ is $\frac{\delta}{L}$--close to $L$,
	the initial direction of $e_{ij}^M$ is $\frac{\delta}{L}$--close to the first vector of ${\bold F}_{ijk}^M$
	and the terminal direction of $e_{ij}^M$ is $\frac{\delta}{L}$--close to the first vector of $-{\bold F}_{jik}^M$.
	\item The parallel transport of ${\bold F}_{ijk}^M$ along $e_{ij}^M$ is $\frac{\delta}{L}$--close to $-{\bold F}_{jik}^M$,
	and there is a unique shortest path in $\SO(M)|_{m_j}$ between these two frames.
	\item In $H_1(\SO(M),{\bold F}_{ijk}^M\cup(-{\bold F}_{jik}^M);\,\Integral)$,
	$$[{\bold F}_{ijk}^M\xrightarrow{e_{ij}^M}(-{\bold F}_{jik}^M)]={\bold j}_*[{\bold F}_{ijk}\xrightarrow{e_{ij}}(-{\bold F}_{jik})].$$	
	\end{itemize}
	Construct $e_{ij}^M$ accordingly for all oriented edges $e_{ij}$ pairs with $i<j$
	with respect to the smallest possible $k$ as above.
	We define $e_{ji}^M$ to be the orientation-reversal of $e_{ij}^M$.	
	
	One could certainly construct an oriented geodesic arc $e_{ij}^M$ with respect to a different available $k$, or
	construct $e_{ij}^M$ for $i>j$ directly, satisfying the same description.
	However, such extra work is unnecessary, nor much helpful.
	The point is that what we have constructed works for all:
	
	\begin{lemma}\label{welldefine}
		For any triple of vertices $(n_i,n_j,n_k)$ spanning a marked $2$-simplex $\Delta_{ijk}$ of $N$, the corresponding
		oriented geodesic arc $e_{ij}^M$ of our construction also satisfies the description with respect to $(n_i,n_j,n_k)$.
	\end{lemma}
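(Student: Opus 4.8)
The plan is to verify the three properties of the bulleted description one at a time, the guiding observation being that replacing the third vertex $n_{k}$, or reversing the edge, affects every ingredient of the description only through the right $\SO(3)$--action on the frame bundles or through arc-reversal, operations under which parallel transport, the bundle morphism ${\bold i}_{1}$, and the symbol $[\,\cdot\xrightarrow{\,\cdot\,}\cdot\,]$ are all natural. First I would record that the first vector of ${\bold F}_{ijk}^{M}$ is independent of $k$: since ${\bold F}_{ijk}$ and ${\bold F}_{ijk_{0}}$ are frames at $n_{i}$ with the common first vector $\vec{v}_{ij}$, one has ${\bold F}_{ijk}={\bold F}_{ijk_{0}}\cdot g$ for a $g\in\SO(3)$ that fixes the first basis vector, so ${\bold F}_{ijk}^{M}={\bold F}_{ijk_{0}}^{M}\cdot g$ by the $\SO(3)$--equivariance of ${\bold i}_{1}$ and ${\bold F}_{ijk}^{M}$ again has first vector $\vec{v}_{ij}^{M}$; likewise the first vector of $-{\bold F}_{jik}^{M}$ is $-\vec{v}_{ji}^{M}$ for every admissible $k$. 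Hence the length and direction requirements of the first property are insensitive to $k$, so for $i<j$ they hold for all admissible $k$ because the construction ensures them for $k=k_{0}$, and for $i>j$ they follow from the first property for $e_{ji}^{M}$ (an $i<j$ instance with $i$ and $j$ exchanged) together with the fact that the orientation-reversal $e_{ij}^{M}=\overline{e_{ji}^{M}}$ keeps the length and negates the two endpoint directions.

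Next I would treat the second and third properties for $i<j$. With $g$ as above, let $R_{g}$ denote right multiplication by $g$ on $\SO(N)$ or $\SO(M)$. Since $-{\bold F}_{jik}$ and $-{\bold F}_{jik_{0}}$ are the parallel transports of ${\bold F}_{ijk}$ and ${\bold F}_{ijk_{0}}$ along $e_{ij}$ in $N$, the $R_{g}$--equivariance of parallel transport gives $-{\bold F}_{jik}=(-{\bold F}_{jik_{0}})\cdot g$ in $N$; applying the equivariant ${\bold i}_{1}$ to this and to ${\bold F}_{ijk}={\bold F}_{ijk_{0}}\cdot g$ yields $-{\bold F}_{jik}^{M}=(-{\bold F}_{jik_{0}}^{M})\cdot g$ and ${\bold F}_{ijk}^{M}={\bold F}_{ijk_{0}}^{M}\cdot g$. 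Thus $R_{g}$ carries the frame pair $\{{\bold F}_{ijk_{0}}^{M},-{\bold F}_{jik_{0}}^{M}\}$ onto $\{{\bold F}_{ijk}^{M},-{\bold F}_{jik}^{M}\}$, hence induces an isomorphism of the two relevant relative homology groups, and, as $R_{g}$ commutes with parallel transport and is a fiberwise isometry, the canonical lift path attached to $e_{ij}^{M}$ for the second pair is the $R_{g}$--image of the one for the first. The second property for $k$ then follows from that for $k_{0}$---closeness is preserved by the isometry, and uniqueness of the shortest fiber path is automatic once $\frac{\delta}{L}$ is small. For the third property I would apply $(R_{g})_{*}$ to the equality $[{\bold F}_{ijk_{0}}^{M}\xrightarrow{e_{ij}^{M}}(-{\bold F}_{jik_{0}}^{M})]={\bold j}_{*}[{\bold F}_{ijk_{0}}\xrightarrow{e_{ij}}(-{\bold F}_{jik_{0}})]$ from the construction, using that ${\bold j}_{*}$---which on relative homology with these two-point bases is induced by ${\bold i}_{1}$ and is compatible with the ${\bold j}_{*}$ of Proposition~\ref{instruction}---commutes with $(R_{g})_{*}$ since ${\bold i}_{1}\circ R_{g}=R_{g}\circ{\bold i}_{1}$; the two sides then become $[{\bold F}_{ijk}^{M}\xrightarrow{e_{ij}^{M}}(-{\bold F}_{jik}^{M})]$ and ${\bold j}_{*}[{\bold F}_{ijk}\xrightarrow{e_{ij}}(-{\bold F}_{jik})]$, as required.

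For $i>j$ one has $e_{ij}^{M}=\overline{e_{ji}^{M}}$ with $j<i$, so the description for $e_{ji}^{M}$ relative to $(n_{j},n_{i},n_{k})$ is available from the previous step. I would use three reversal facts. First, parallel transport along $\overline{e_{ji}^{M}}$ inverts that along $e_{ji}^{M}$, which, combined with the identity ${\bold F}=(-{\bold F})\cdot g_{0}$ for the fixed $g_{0}=\mathrm{diag}(-1,-1,1)\in\SO(3)$, turns the second property for $e_{ji}^{M}$ into the second property for $e_{ij}^{M}$. Second, $R_{g_{0}}$ (which commutes with ${\bold i}_{1}$ and with parallel transport and is a fiberwise isometry) maps the base set $\{-{\bold F}_{ijk}^{M},{\bold F}_{jik}^{M}\}$ onto $\{{\bold F}_{ijk}^{M},-{\bold F}_{jik}^{M}\}$, so $[{\bold F}_{ijk}^{M}\xrightarrow{e_{ij}^{M}}(-{\bold F}_{jik}^{M})]=(R_{g_{0}})_{*}[(-{\bold F}_{ijk}^{M})\xrightarrow{e_{ij}^{M}}{\bold F}_{jik}^{M}]$, and analogously in $N$. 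Third, reversing a path negates its relative homology class, which identifies $[(-{\bold F}_{ijk}^{M})\xrightarrow{e_{ij}^{M}}{\bold F}_{jik}^{M}]$ with $-[{\bold F}_{jik}^{M}\xrightarrow{e_{ji}^{M}}(-{\bold F}_{ijk}^{M})]$, and similarly in $N$, where $e_{ji}=\overline{e_{ij}}$ and the parallel-transport relation is an exact equality so the two paths are literal reverses. Combining these with the third property for $e_{ji}^{M}$ relative to $(n_{j},n_{i},n_{k})$ and with the commutation of ${\bold j}_{*}$ with $R_{g_{0}}$, one recovers the third property for $e_{ij}^{M}$ relative to $(n_{i},n_{j},n_{k})$, the two sign changes and the two right-translations cancelling.

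The step I expect to be the real obstacle is the reversal comparison used in the last paragraph, carried out in the domain $M$: one must check that reversing the long geodesic arc $e_{ji}^{M}$, inverting parallel transport, and reversing the short fiber segments genuinely yield the canonical lift path attached to $e_{ij}^{M}$ \emph{up to a homotopy rel endpoints small enough not to change its relative homology class}---in particular one that does not insert the nontrivial loop of the $\SO(3)$--fiber. This is where the hypothesis is used that every frame produced is $\frac{\delta}{L}$--close to its target with $\frac{\delta}{L}$ below the injectivity radius of the fiber: the discrepancy between the two candidate lift paths is then confined to a geodesic ball in a single fiber, hence contractible, and a short explicit homotopy through a trivialized neighborhood of the arc joins them. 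Everything else is bookkeeping with the functoriality and $\SO(3)$--equivariance already built into the good pants construction.
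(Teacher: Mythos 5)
Your proposal is correct and follows essentially the same route as the paper: your right translations by $g=A(\theta_0)$ and $g_0=C(\pi)$, together with naturality of relative homology and the $\SO(3)$--equivariance of ${\bold i}_1$ (which is what makes ${\bold j}_*$ commute with them on classes carried by $\SO(N)|_{N^{(1)}}$, such as the lift paths over edges), are exactly the paper's mapped-in squares swept by $A(\theta_0 t)$ and $C(\pi t)$ and the correspondence (\ref{correspondence}) repackaged as functoriality. You also isolate and resolve the same delicate point the paper flags, namely that in $M$ the lift paths attached to $e_{ij}^M$ and $e_{ji}^M$ are only approximate reverses of one another; just note that this discrepancy is spread (fiberwise $\frac{\delta}{L}$--small) along the whole arc rather than confined to a single fiber, which your small homotopy over a trivialized neighborhood of the arc---the paper's mapped-in-square argument---does indeed take care of.
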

	
	\begin{proof}
		It is easy to see that the first two items in the description remains true if we take
		any other $2$-simplex $\Delta_{ijk'}$ or consider $e_{ji}$,
		by the fact that ${\bold i}_1$ commutes with the right $\SO(3)$--action.
		So our goal is to verify the third item of the description,
		and it amounts to prove the following statement:
		For an edge $e_{ij}$ in the triangulation of $N$ from $n_i$ to $n_j$,
		and a geodesic arc $e_{ij}^M$ in $M$ from $m_i$ to $m_j$,
		Suppose that the following equality of relative homology classes holds for some marked $2$-simplex $\Delta_{ijk}$ of $N$:
		\begin{equation}\label{F_ijk}
			[{\bold F}_{ijk}^M\xrightarrow{e_{ij}^M}(-{\bold F}_{jik}^M)]={\bold j}_*[{\bold F}_{ijk}\xrightarrow{e_{ij}}(-{\bold F}_{jik})],
		\end{equation}
		then for any other marked $2$-simplex $\Delta_{ijk'}$ of $N$,
		\begin{equation}\label{F_ijk_prime}
			[{\bold F}_{ijk'}^M\xrightarrow{e_{ij}^M}(-{\bold F}_{jik'}^M)]={\bold j}_*[{\bold F}_{ijk'}\xrightarrow{e_{ij}}(-{\bold F}_{jik'})],
		\end{equation}
		and
		\begin{equation}\label{F_jik}
			[{\bold F}_{jik}^M\xrightarrow{e_{ji}^M}(-{\bold F}_{ijk}^M)]={\bold j}_*[{\bold F}_{jik}\xrightarrow{e_{ji}}(-{\bold F}_{ijk})].
		\end{equation}

		We first verify the identity (\ref{F_ijk_prime}). Denote by
			$$A(\theta)=\begin{bmatrix}%\left[\begin{array}{ccc}
			1 & 0 & 0\\
			0 & \cos{\theta} & \sin{\theta}\\
			0 & -\sin{\theta} & \cos{\theta}
			\end{bmatrix},$$
		the $3\times 3$ orthogornal matrix which represents a rotation about the first vector.
		Since the first vectors in both ${\bold F}_{ijk}$ and ${\bold F}_{ijk'}$ are $\vec{v}_{ij}$,
		we have $${\bold F}_{ijk'}={\bold F}_{ijk}\cdot A(\theta_0).$$
		Here $\theta_0$ is the dihedral angle between $\Delta_{ijk}$ and $\Delta_{ijk'}$.
		Since $2$-simplices are totally geodesic in $N$, we have
			$$-{\bold F}_{jik'}=(-{\bold F}_{jik})\cdot A(\theta_0).$$

		Similar to the notation of relative homology classes given by parallel transportations,
		we use
			$$[{\bold F}_{ijk}\xrightarrow{A(\theta_0t)}{\bold F}_{ijk'}]$$
		to denote the relative homology class in $H_1(\SO(N),F_N;\,\Integral)$
		which is given by the path
		$t\mapsto {\bold F}_{ijk}\cdot A(\theta_0t)$
		with the initial point ${\bold F}_{ijk}$ and the terminal point ${\bold F}_{ijk'}$.

		Note that the parallel transportation along a fixed geodesic commutes with
		the right $\SO(3)$-action on the frame bundle.
		By considering the parallel transportation of ${\bold F}_{ijk}\cdot A(\theta_0t)$ along $e_{ij}$ for all $t\in [0,1]$,
		we get a map $I\times I\to \SO(N)$ such that the relative homology class of its boundary in $H_1(\SO(N),F_N;\,\Integral)$ is
		\begin{equation}\label{zeroinN}
		[{\bold F}_{ijk},-{\bold F}_{jik}]+[-{\bold F}_{jik},-{\bold F}_{jik'}]
		-[{\bold F}_{ijk'},-{\bold F}_{jik'}]-[{\bold F}_{ijk},{\bold F}_{ijk'}]=0.
		\end{equation}
		Here we have suppressed the understood paths to abbreviate the expression.

		Passing to $M$, we again observe that the identification ${\bold i}_1\colon \SO(N)|_{V_N}\to \SO(M)|_{V_M}$
		commutes with the right $\SO(3)$--action. Therefore,
			$${\bold F}_{ijk'}^M={\bold F}_{ijk}^M\cdot A(\theta_0) \ \text{and}\ -{\bold F}_{jik'}^M=(-{\bold F}_{jik}^M)\cdot A(\theta_0).$$

		Let $-\bar{{\bold F}}_{jik}^M$ be the parallel transportation of ${\bold F}_{ijk}^M$ along $e_{ij}^M$.
		Note that $-\bar{{\bold F}}_{jik}^M$ is not exactly $-{\bold F}_{jik}^M$,
		but there exists a $\frac{\delta}{L}$--short geodesic $\gamma\colon I\to \SO(3)$ with $\gamma(0)=I_3$
		such that $-{\bold F}_{jik}^M=-\bar{{\bold F}}_{jik}^M\cdot \gamma(1)$.
		We can define a map $I\times I\to \SO(M)$ as follows.
		For every $t\in [0,1]$,
		the $t$--slice is the concatenation of a path defined by
		the parallel transportation of ${\bold F}_{ijk}^M\cdot A(\theta_0t)$ along $e_{ij}^M$
		(ending as $(-\bar{{\bold F}}_{jik}^M)\cdot A(\theta_0t)$),
		and a path defined by the $\SO(3)$--action of $A(\theta_0t)^{-1}\cdot \gamma \cdot A(\theta_0t)$,
		(ending as $(-{\bold F}_{jik}^M) \cdot A(\theta_0t)$).

		The relative homology class of the boundary of this mapped-in square, in $H_1(\SO(M),F_M;\,\Integral)$, is
		\begin{equation}\label{zeroinM}
		[{\bold F}_{ijk}^M,-{\bold F}_{jik}^M]+[-{\bold F}_{jik}^M,-{\bold F}_{jik'}^M]-[{\bold F}_{ijk'}^M,-{\bold F}_{jik'}^M]-[{\bold F}_{ijk}^M,{\bold F}_{ijk'}^M]=0.
		\end{equation}
		The suppressed path labels are $e_{ij}^M$, $A(\theta_0t)$, $e_{ij}^M$, and $A(\theta_0t)$ accordingly.
				
		By the diagram (\ref{subframe}) and the fact that ${\bold i}_1\colon SO(N)|_{V_N}\to SO(M)|_{V_M}$ commutes with the right $\SO(3)$--action,
		we have
		\begin{equation}\label{correspondence}
		\begin{split}
		& [(-{\bold F}_{jik}^M)\xrightarrow{A(\theta_0t)}(-{\bold F}_{jik'}^M)]={\bold j}_*[(-{\bold F}_{jik})\xrightarrow{A(\theta_0t)}(-{\bold F}_{jik'})],\\
		& [{\bold F}_{ijk}^M\xrightarrow{A(\theta_0t)}{\bold F}_{ijk'}^M]={\bold j}_*[{\bold F}_{ijk}\xrightarrow{A(\theta_0t)}{\bold F}_{ijk'}].
		\end{split}
		\end{equation}

		As ${\bold j}_*$ is a homomorphism,
		by the equalities (\ref{zeroinN}), (\ref{zeroinM}), (\ref{correspondence}), and the assumed identity (\ref{F_ijk}),
		we obtain
			$$[{\bold F}_{ijk'}^M\xrightarrow{e_{ij}^M}(-{\bold F}_{jik'}^M)]={\bold j}_*[{\bold F}_{ijk'}\xrightarrow{e_{ij}}(-{\bold F}_{jik'})].$$
		This verifies the claimed identity (\ref{F_ijk_prime}).
				
		The verification of the identity (\ref{F_jik}) is similar, so we only give a sketch here.
		
		Denote by
		$$C(\theta)=\begin{bmatrix}
		\cos{\theta} & \sin{\theta} & 0\\
		-\sin{\theta} & \cos{\theta} & 0\\
		0 & 0 & 1
		\end{bmatrix}$$
		the $3\times 3$ orthogornal matrix which represents a rotation about the third vector.
		By considering the parallel transportation of ${\bold F}_{ijk}\cdot C(\pi t)$ along $e_{ij}$,
		we get a map $I\times I\to \SO(N)$, of which the relative homology class of the boundary in $H_1(\SO(N),F_N;\,\Integral)$ is
		\begin{equation}\label{zeroinN2}
		[{\bold F}_{ijk},-{\bold F}_{jik}]+[-{\bold F}_{jik},{\bold F}_{jik}]-[-{\bold F}_{ijk},{\bold F}_{jik}]-[{\bold F}_{ijk},-{\bold F}_{ijk}]=0.
		\end{equation}
		The suppressed path labels are $e_{ij}$, $C(\pi t)$, $e_{ij}$, and $C(\pi t)$ accordingly.
		Note that
			$$-[(-{\bold F}_{ijk})\xrightarrow{e_{ij}}{\bold F}_{jik}]=[{\bold F}_{jik}\xrightarrow{e_{ji}}(-{\bold F}_{ijk})]$$
		holds.

		Likewise to the above situation, there is a vanishing relative homology class in $H_1(\SO(M),F_M;\mathbb{Z})$,
		which is the boundary of a likewise mapped-in square in $\SO(M)$.
		We proceed in a similar fashion as before to complete the verification,
		with the only difference explained as the following.
		
		While the paths in $\SO(N)$ giving $[(-{\bold F}_{ijk})\xrightarrow{e_{ij}}{\bold F}_{jik}]$
		and $[{\bold F}_{jik}\xrightarrow{e_{ji}}(-{\bold F}_{jik})]$ are exactly the inverse of each other,
		the paths in $\SO(M)$ giving $[(-{\bold F}_{ijk}^M)\xrightarrow{e_{ij}^M}{\bold F}_{jik}^M]$
		and $[{\bold F}_{jik}^M\xrightarrow{e_{ji}^M}(-{\bold F}_{jik}^M)]$
		may differ by a uniform short distance along the parallel transportation part.
		However, this does not affect the fact that
		the relative homology classes in $H_1(\SO(M),F_M;\mathbb{Z})$ are the negative of each other,
		which can again be shown by a mapped-in-square argument.
		So by comparing the equalities and using the definition that $e_{ij}^M$ is the orientation-reversal of $e_{ji}^M$, one will obtain
			$$-[(-{\bold F}_{ijk}^M)\xrightarrow{e_{ij}^M}{\bold F}_{jik}^M]=[{\bold F}_{jik}^M\xrightarrow{e_{ji}^M}(-{\bold F}_{ijk}^M)],$$
		which verifies the identity (\ref{F_jik}).
	\end{proof}
	
	We attach edges to the analogue $0$-skeleton $Z^0$ to build the analogue $1$-skeleton $Z^1$,
	so that $Z^1$ is naturally isomorphic to the $1$-skeleton of $N$. The geodesic arcs $e_{ij}^M$
	that we have constructed give rise to a distinguished immersion:
		$$Z^1\looparrowright M.$$
	
	\subsubsection{Killing triangles with good surfaces}
	Corresponding to each $2$-simplex $\Delta_{ijk}$ spanned by a triple of vertices $(n_i,n_j,n_k)$
	of $N$, we create an immersed nearly totally geodesic subsurface of $M$
	with connected cornered boundary, $S_{ijk}^M$, such that $\partial S_{ijk}^M$ is the cyclic concatenation of the segments $e_{ij}^M$, $e_{jk}^M$,
	and $e_{ki}^M$. Combinatorially $S_{ijk}^M$ is built as
	a panted surface collared by a three-spike crown
	(an annulus with three corners on the outer boundary).
	Since these immersed subsurfaces of $M$ should correspond to the $2$-simplices of $N$
	regardless of the marking, we only need to construct $S_{ijk}^M$ for those marked $\Delta_{ijk}$
	such that $i<j<k$, as follows.
	
	Denote by $\gamma_{ijk}$ the unique oriented closed geodesic of $M$
	which is freely homotopic to the cyclic concatenation of
	the three oriented geodesic arcs $e_{ij}^M$, $e_{jk}^M$ and $e_{ki}^M$.
	The following lemma describes the shape and framing homology class of $\gamma_{ijk}$.
	
	We denote by
		$$I(\theta)=2\log(\sec(\theta/2))$$
	the limit inefficiency associated to a bending angle $\theta\in(0,\pi)$,
	(see \cite[Section 4.2]{LM} for the geometric meaning of this quantity).
	Recall that we have introduced a uniform bound $\phi_0\in(0,\pi)$
	depending only on the chosen geometric triangulation
	of $N$, so that all the $2$-simplex external angles	$\theta_{ijk}$ of $N$
	lie in the interval $[\phi_0,\pi-\phi_0]$.
	Recall also the $\Integral_2$-valued panted-cobordism invariant $\sigma$ from Theorem \ref{bounding}.

	\begin{lemma}\label{noObstruction}
		For any triple of vertices $(n_i,n_j,n_k)$ spanning a marked $2$-simplex $\Delta_{ijk}$ of $N$, the cyclic concatenation
		of the corresponding oriented geodesic arcs $e_{ij}^M$, $e_{jk}^M$, $e_{ki}^M$
		is homotopic to a unique oriented closed geodesic $\gamma_{ijk}$ of $M$. Moreover,
		the following properties hold true:
		\begin{enumerate}
		\item
			There exists an immersed annulus cobounded by
			the cyclic concatenation of $e_{ij}^M$, $e_{jk}^M$, and $e_{ki}^M$ and
			the curve $\gamma_{ijk}$. It is nearly totally geodesic and untwisted,
			as can be described by the following:
			\begin{itemize}
			\item The annulus has a triangulation with vertices $m_i$, $m_j$, $m_k$
			and three other vertices on $\gamma_{ijk}$.
			\item The immersion is totally geodesic on each $2$-simplex and has bending angles along common edges
			bounded by $\frac{10\delta}{L}$;
			\item The normal vectors along $e_{ij}^M$ of the $2$-simplex containing $e_{ij}^M$ are $\frac{10\delta}L$--close
			to the $\vec{n}$-vectors of the parallel transports of ${\bold F}_{ijk}^M$ accordingly,
			and similar statements hold for	$e_{jk}^M$ and $e_{ki}^M$.
			\end{itemize}
		\item
			The curve $\gamma_{ijk}$ is good and panted null-cobordant,
			as implied by the following description:
			\begin{itemize}
				\item The curve $\gamma_{ijk}$ is $(R_{ijk},\frac{100\delta}{L})$--good
				where $R_{ijk}=3L-I(\theta_{ijk})-I(\theta_{jki})-I(\theta_{kij})$,
				uniformly contained in $[3L-3I(\pi-\phi_0),3L-3I(\phi_0)]$.
				\item The curve $\gamma_{ijk}$ is null-homologous in $M$.
				\item The invariant $\sigma(\gamma_{ijk})=0\bmod 2$.
			\end{itemize}
		\end{enumerate}
	\end{lemma}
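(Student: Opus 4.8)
The plan is to analyze the geometric figure formed by the three geodesic arcs $e_{ij}^M$, $e_{jk}^M$, $e_{ki}^M$ joined cyclically and to recognize it as a nearly totally geodesic broken geodesic whose bending angles at $m_i$, $m_j$, $m_k$ are approximately the external angles $\theta_{kij}$, $\theta_{ijk}$, $\theta_{jki}$ of the corresponding $2$-simplex in $N$. First I would pin down the local picture at each vertex: by the description of $e_{ij}^M$ and Lemma \ref{welldefine}, the terminal direction of $e_{ki}^M$ at $m_i$ is $\frac{\delta}{L}$--close to the first vector of $-{\bold F}_{kij}^M=-\mathbf{i}_1({\bold F}_{kij})$, i.e.\ to $-\vec{v}_{ik}^M$, while the initial direction of $e_{ij}^M$ at $m_i$ is $\frac{\delta}{L}$--close to the first vector of ${\bold F}_{ijk}^M=\mathbf{i}_1({\bold F}_{ijk})$, i.e.\ to $\vec{v}_{ij}^M$; since $\mathbf{i}_1$ is an isometry on frames over $V_N$, the angle between these directions is $\frac{2\delta}{L}$--close to the angle between $-\vec{v}_{ik}$ and $\vec{v}_{ij}$ in $N$, which is exactly $\theta_{kij}$ (note ${\bold F}_{ijk}$ and ${\bold F}_{kij}$ share the ambient $2$-simplex, so they are coplanar, and the relevant external angle is the same quantity appearing in $I(\theta)$). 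The same analysis at $m_j$ and $m_k$ gives bending angles $\frac{2\delta}{L}$--close to $\theta_{ijk}$ and $\theta_{jki}$, all of which lie in $[\phi_0,\pi-\phi_0]$ by the definition of $\phi_0$.

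Next, for part (1), I would build the collaring annulus directly from this broken-geodesic picture. Each arc $e_{ij}^M$ has length $\frac{\delta}{L}$--close to $L$, and the three arcs meet at bending angles uniformly bounded away from $0$ and $\pi$; this is precisely the setup in which a long broken geodesic straightens to a genuine closed geodesic $\gamma_{ijk}$ at uniformly bounded distance, with the straightening homotopy sweeping out a nearly totally geodesic annulus. I would subdivide each arc near its endpoints, place three new vertices on $\gamma_{ijk}$ (roughly opposite the midpoints of the three arcs), and triangulate the annulus so that each simplex is totally geodesic; the standard estimates from \cite[Section 4]{Sun2} (applied with $\epsilon$ replaced by $\frac{\delta}{L}$ as in Section \ref{epsilonbyL}) control the bending angles along common edges by $\frac{10\delta}{L}$ and show that the normal frame transported along $e_{ij}^M$ stays $\frac{10\delta}{L}$--close to the parallel transport of ${\bold F}_{ijk}^M$, because the normal vector $\vec{n}_{ijk}^M$ was chosen (via $\mathbf{i}_1$) to be the normal to the totally geodesic plane the straightening takes place in. Here I would simply invoke the geometric estimates of \cite{Sun2}, noting that our situation is a special case where the three pieces are single long geodesic arcs rather than chains of good pants.

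For part (2), the length statement is bookkeeping: the limit inefficiency $I(\theta)$ is, by definition \cite[Section 4.2]{LM}, the asymptotic length defect of straightening a bend of angle $\theta$, so summing over the three vertices gives $R_{ijk}=3L-I(\theta_{kij})-I(\theta_{ijk})-I(\theta_{jki})$ up to an error absorbed into the good-curve tolerance, and the enclosing interval follows from $\theta_{\bullet}\in[\phi_0,\pi-\phi_0]$ together with monotonicity of $I$; the rotation part of the complex length of $\gamma_{ijk}$ is controlled by the accumulated normal-frame drift, hence $\frac{100\delta}{L}$--close to $0$, so $\gamma_{ijk}\in{\bold\Gamma}_{R_{ijk},100\delta/L}$. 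Null-homology of $\gamma_{ijk}$ in $M$ is where the homological instruction enters: by Theorem \ref{homology} it suffices to compute $\Phi([\gamma_{ijk}])\in H_1(\SO(M);\Integral)$, and I claim its image under the projection $\SO(M)\to M$ — equivalently $[\gamma_{ijk}]\in H_1(M;\Integral)$ — is the image under ${\bold i}_{1*}$ of the boundary cycle $e_{ij}+e_{jk}+e_{ki}$ in $N^{(1)}$, which bounds the $2$-simplex $\Delta_{ijk}$ and so is null-homologous in $N$; since $\mathbf{i}_1$ realizes the homological lift $i_*\colon H_1(N)\to H_1(M)$ (and $i_*$ kills the class of a boundary of a $2$-simplex), this projection vanishes. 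Finally, for $\sigma(\gamma_{ijk})=\bar0$: by definition $\sigma(\gamma_{ijk})$ is the $H_1(\SO(3);\Integral)$--component of $\Phi([\gamma_{ijk}])$, and I would compute this component as $\mathbf{j}_*$ applied to the corresponding framed-arc class in $N$. Concretely, the canonical lift $\hat\gamma_{ijk}$ is homologous (rel nothing) to the cyclic concatenation $[{\bold F}_{ijk}^M\xrightarrow{e_{ij}^M}(-{\bold F}_{jik}^M)]+[{\bold F}_{jik}^M\xrightarrow{e_{jk}^M}\cdots]+\cdots$ corrected by the rotations $A(\theta_{\bullet})$ relating the two frames attached to each vertex (one from each incident arc) — this is a mapped-in-square argument identical in form to the ones in the proof of Lemma \ref{welldefine} — and every term on the right-hand side is, by construction of the $e^M$'s and by the commutative diagrams (\ref{endpoint}), (\ref{subframe}), the image under $\mathbf{j}_*$ of the analogous class in $\SO(N)$. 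The analogous closed-up cycle in $\SO(N)$ is the canonical lift of the (trivial, point) loop bounding $\Delta_{ijk}$, hence is null-homologous in $\SO(N)$ with vanishing $H_1(\SO(3))$--component — the external angles $\theta_{ijk}$ are the \emph{actual} bending angles of a genuine totally geodesic triangle in $N$, so no spurious $\Integral_2$ appears. Therefore $\sigma(\gamma_{ijk})=\mathbf{j}_*(\bar0)=\bar0$.

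The main obstacle I expect is the last point: carefully assembling the mapped-in-square homotopies so that the framed closed curve $\hat\gamma_{ijk}$ in $\SO(M)$ is genuinely identified, on the nose in $H_1(\SO(M);\Integral)$, with $\mathbf{j}_*$ of the framed boundary of $\Delta_{ijk}$ in $\SO(N)$ — one must track the discrepancy frames $\gamma(1)$ produced by the enhanced connection principle at each vertex (as in Lemma \ref{welldefine}) and the vertex rotations $A(\theta_{\bullet})$, and check these contributions cancel correctly around the triangle. Once that identification is in place, the vanishing of $\sigma$ is automatic because the target class in $\SO(N)$ is visibly null-homologous; the geometric estimates in parts (1) and the length/rotation bounds in (2) are then routine adaptations of \cite[Section 4]{Sun2}.
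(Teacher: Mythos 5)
Your overall skeleton matches the paper's: straighten the broken geodesic $e_{ij}^M\cup e_{jk}^M\cup e_{ki}^M$ using the inefficiency formulas of \cite[Lemma 4.8]{LM}, build the collaring annulus as in \cite[Lemma 3.5]{Sun2}, get null-homology in $M$ by projecting to the base and using that $i_*$ kills $[\partial\Delta_{ijk}]$, and compute the fiber class by transferring a framed-cycle identity from $\SO(N)$ through ${\bold j}_*$ via mapped-in squares and the annulus. However, the $\Integral_2$ bookkeeping in your $\sigma$-step --- the only genuinely delicate point of this lemma --- is wrong in two places. First, the closed-up framed cycle in $\SO(N)$ (parallel transport of ${\bold F}_{ijk}$ along the edges, corrected at the corners by rotations through the external angles) is \emph{not} null-homologous: its fiber component is the nontrivial class $c\in H_1(\SO(3);\Integral)$. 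This is seen by shrinking the triangle to a point: the external angles sum to $2\pi$ in the limit, so the cycle degenerates to a full $2\pi$ rotation of a frame, which is the nontrivial element of $\pi_1(\SO(3))$. Your parenthetical ``the external angles are the actual bending angles of a genuine totally geodesic triangle, so no spurious $\Integral_2$ appears'' is exactly the pitfall: a full turn \emph{is} the spurious $\Integral_2$. Second, the canonical lift $\hat\gamma_{ijk}$ that defines $\sigma$ contains, by definition, an extra full rotation about the tangent vector, so it is \emph{not} homologous to the plain concatenation-with-corner-rotations; the two differ by $c$ as well. In the paper these two occurrences of $c$ are made explicit and cancel: vanishing of $\sigma(\gamma_{ijk})$ is equivalent to the plain transport loop $[{\bold F}\xrightarrow{\gamma_{ijk}}{\bold F}]$ representing $c$, and the identity transferred from $\SO(N)$ by ${\bold j}_*$ gives precisely $c$. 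Your two errors happen to cancel, so your conclusion is accidentally right, but as written the argument asserts two false intermediate identities; fixing either one alone would lead you to conclude $\sigma(\gamma_{ijk})=c\neq 0$ and the construction would appear to fail.

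Two smaller points. The corner corrections relating the incoming frame $-{\bold F}_{jik}$ to the outgoing frame ${\bold F}_{jki}$ are rotations about the \emph{second} vector (the normal to the $2$-simplex), i.e.\ the paper's $B(\theta)$ matrices, not $A(\theta)$ (rotation about the first vector); using $A$ the two frames are not even related, since $-{\bold F}_{jik}$ and ${\bold F}_{jki}$ share their normal vector but not their tangent vector. Also your indexing of the bending angles is cyclically shifted (the angle at $m_i$ is $\theta_{ijk}$, the external angle of $\Delta_{ijk}$ at $n_i$, not $\theta_{kij}$); this is harmless for the total length $R_{ijk}$ but worth straightening out. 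The geometric estimates in Part (1) and the length/rotation bounds in Part (2), as well as the null-homology argument, are fine and follow the paper's route.
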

	
	\begin{proof}
		The first item of Part (2), about the geometry of $\gamma_{ijk}$,
		follows directly from the formulas of \cite[Lemma 4.8]{LM}.
		Note that we have been working under the hypothesis that
		the parameters $\delta$ and $L$ are suitably given.
		In particular, we see that $\gamma_{ijk}$ is homotopically nontrivial.
		
		For Part (1) about the geometry of the immersed annulus,
		the situation is very similar to \cite[Lemma 3.5]{Sun2} and the discussion thereafter,
		so we give a very brief sketch. The free homotopy between
		the cyclic concatenation of $e_{ij}^M$, $e_{jk}^M$ and $e_{ki}^M$ and $\gamma_{ijk}$
		can be realized by a map of an annulus.
		To be specific, we may triangulate the annulus by adding three vertices $p_i$, $p_j$ and $p_k$
		on $\gamma$, and adding six edges, say, $p_im_i$, $p_im_j$,
		$p_jm_j$, $p_jm_k$, $p_km_k$, and $p_km_i$.
		The mapped-in annulus can be homotoped relative to boundary so that the triangulation is
		totally geodesic on each simplex, and the edges $p_im_i$, $p_jm_j$, and $p_km_k$ are perpendicular
		to $\gamma_{ijk}$.
		Then all the claimed estimates follow directly from the formulas of \cite[Lemma 4.8]{LM}.
		
		It remains to show the homological properties about $\gamma_{ijk}$,
		namely, the last two items of Part (2).
		The homology class of $\gamma_{ijk}$ can be seen
		easily from the construction of $Z^1\looparrowright M$.
		In fact, by projecting paths of frames in the base manifold, it is clear that the cycle
		$e_{ij}^M+e_{ij}^M+e_{ij}^M$ represents the class $i_*[\partial\Delta_{ijk}]$
		in $H_1(M;\Integral)$, where the latter equals zero.
		
		To prove the vanishing of $\sigma(\gamma_{ijk})$.
		we need to explain the constructive definition of $\sigma$ in more details.
		For any $(R,\epsilon)$--good curve $\gamma$ of $M$,	recall that a \emph{canonical lifting} of $\gamma$ is
		a loop of frames in $\SO(M)$ defined as the following:
		Take a point $p\in \gamma$ and a frame ${\bold F}=(\vec{v}_p,\vec{n}_p,\vec{v}_p\times \vec{n}_p)$ based at $p$
		such that $\vec{v}_p$ is tangent to $\gamma$;
		starting from ${\bold F}$, draw a path of frames by parallel trasporting
		${\bold F}$ along $\gamma$ for one round, arriving at a frame ${\bold F}'$ over the same point $p$;
		continue by making a full rotation of ${\bold F'}$ (counterclockwise) about its first vector;
		close the path by joining ${\bold F}'$ to ${\bold F}$ by the the shortest path in $\SO(M)|_p$.
		%Loops of frames obtained in this way give rise to one and the same free homotopy class in $\pi_1(\SO(M))$
		%that is independent of $p$ or ${\bold F}$.
		%In other words, for the homology class of the canonical lifting of $\gamma$,
		%$$[{\bold F}\xrightarrow{\gamma_p}{\bold F}]+c\in H_1(\SO(M);\Integral)$$
		When $\gamma$ is null-homologous in $H_1(M;\Integral)$, any canonical lifting of $\gamma$
		defines one and the same class
		in the fiber submodule $H_1(\SO(3);\Integral)\cong \Integral_2$
		of $H_1(\SO(M);\Integral)$.
		In this case, the invariant $\sigma(\gamma)$ can be
		explicitly identified with the class of the canonical lifting of $\gamma$.
		
		In the following, we denote the unique nontrivial element of the fiber submodule
		$H_1(\SO(3);\Integral)$ by $c$. The vanishing of $\sigma(\gamma)$ is equivalent to the identity:
		\begin{equation}\label{identity_c}
			[{\bold F}\xrightarrow{\gamma_{ijk}}{\bold F}]=c,
		\end{equation}
		for some (hence any) frame ${\bold F}$ at a point $p\in\gamma_{ijk}$ whose first vector is tangent to $\gamma_{ijk}$.
		The label of the arrow is considered as the closed path with both endpoints the point $p$.
				
		To prove the identity (\ref{identity_c}), we start by working with geodesic $2$-simplices of $N$. Denote by
			$$B(\theta)=\begin{bmatrix}
			\cos{\theta}  & 0 & \sin{\theta}\\
			0 & 1 & 0\\
			-\sin{\theta} & 0 & \cos{\theta}
			\end{bmatrix}$$
		the $3\times 3$ matrix representing the rotation about the second vector of an angle $\theta$. It can be observed in $N$
		that in $H_1(\SO(N);\Integral)$,
		\begin{equation}\label{piecewise_identity_c_N}
		\sum_{\textrm{rotating }\\i,j,k}
		\left([{\bold F}_{ijk}\xrightarrow{e_{ij}}(-{\bold F}_{jik})]+[(-{\bold F}_{jik})\xrightarrow{B(\theta_{jki}t)}{\bold F}_{jki}]\right)
		= c.
		\end{equation}
		One way to see this is by a continuation argument. The left-hand side is invariant if the vertices $(n_i,n_j,n_k)$ of $\Delta_{ijk}$
		vary continuously in $N$. Shrinking all the edge lengths towards zero, the sum of the external angles of $\Delta_{ijk}$ tends to $2\pi$.
		In the limit,
		the cycle of frames representing by the left-hand side is a full rotation of a frame, say ${\bold F}_{ijk}$, about its second vector,
		yielding the homology class $c\in H_1(\SO(N);\Integral)$.
		Therefore the identity (\ref{piecewise_identity_c_N}) holds in general.
		
		By applying the homomorphism ${\bold j}_*$, we have in $H_1(\SO(M);\Integral)$:
		\begin{equation}\label{piecewise_identity_c_M}
		\sum_{\textrm{rotating }\\i,j,k}
		\left([{\bold F}_{ijk}^M\xrightarrow{e_{ij}^M}(-{\bold F}_{jik}^M)]+[(-{\bold F}_{jik}^M)\xrightarrow{B(\theta_{jki}t)}{\bold F}_{jki}^M]\right)
		= c.
		\end{equation}
		
		%Note that the cyclic concatenation of $e_{ij}^M$, $e_{jk}^M$,and $e_{ki}^M$ and the curve $\gamma_{ijk}$
		%cobound an immersed nearly totally geodesic annulus in $M$. This can be seen, for example,
		%by suitably triangulating the annulus into geodesic simplices
		%with vertices $m_i$, $m_j$, $m_k$, and $p\in\gamma_{ijk}$. The normal vector of each geodesic $2$-simplex
		%is approximately given by the $\vec{n}$-vector of one of the frames $F_{ijk}$, $F_{jki}$, and $F_{kij}$.
		%Because of our construction of the immersed geodesic segments $e_{ij}^M$, $e_{jk}^M$, and $e_{ki}^M$,
		%the bending angles along the dividing $1$-simplices of the annulus are all approximately $0$.
		%The situation is very similar to \cite[Lemma 3.5]{Sun2} and the discussion thereafter.
			
		Using the nearly totally geodesic annulus of Part (1),
		the left-hand side of the identity (\ref{piecewise_identity_c_M}) can be identified with
		the left-hand side of the identity (\ref{identity_c}).
		To visualize a homotopy between the loops of frames,
		assume for the moment that the annulus was genuinely totally geodesic.
		By foliating the annulus with smooth curves in its interior, the homotopy can be seen as
		the field of frames over the annulus, such that at any point,
		the first vector is tangent to the leaf, and the second vector
		is perpendicular to the annulus.
		The continuity near the cornered boundary can be ensured
		as the frames are parallel transported along the edges
		and rotated counterclockwise about the (instant) second vector
		of the external angle by the $B$-matrix at the corners. In our actual situation,
		the annulus is nearly totally geodesic
		so a homotopy should be given as an approximation to the above picture.
		With the picture in mind,
		it is possible to check the equality explicity by dividing the annulus
		into geodesic $2$-simplices, as we have done, and
		discretize the described homotopy.
		We omit the details since the procedure is straightforward.
		
		Therefore, we have verified the identity (\ref{identity_c}),
		and hence the claimed vanishing of $\sigma(\gamma_{ijk})$.		
	\end{proof}
	
	Corresponding to each $2$-simplex $\Delta_{ijk}$ spanned by a triple of vertices $(n_i,n_j,n_k)$
	of $N$, only for those with $i<j<k$, we take the corresponding oriented good curve $\gamma_{ijk}$ of $M$
	as guaranteed by Lemma \ref{noObstruction}.
	By Part (1) of Lemma \ref{noObstruction}, we may enrich $\gamma_{ijk}$ with a point $p\in\gamma_{ijk}$ and an auxiliary frame
	${\bold F}_p=(\vec{v}_p,\vec{n}_p)$.
	To be specific, we take $\vec{v}_p$ to be the direction vector of $\gamma_{ijk}$ at $p$,
	and require that $\vec{n}_p$ is $\frac{10\delta}L$--close to
	the parallel transport of the $\vec{n}$-vector of ${\bold F}_{ijk}^M$
	along a shortest possible path on	the asserted immersed nearly totally geodesic annulus.
	By Part (2) of Lemma \ref{noObstruction}, we invoke Theorem \ref{bounding} to construct an immersed
	oriented nearly totally geodesic $(R,\epsilon)$-panted subsurface bounded by $\gamma_{ijk}$,
	requiring the normal vector there to be $\frac{\delta}{L}$-close to $\vec{n}_p$.
	Moreover, the panted subsurface can be constructed to be fat
	in the sense that every properly embedded essential arc of the panted subsurface
	has length at least $R$ under the immersion into $M$,
	(see \cite[Section 3.1]{Sun1} and \cite[Lemma 4.2]{Liu}).
	Glue up the constructed panted subsurface with the annulus of Lemma \ref{noObstruction} (1)
	along $\gamma_{ijk}$. We denote the resulting subsurface as
		$$S_{ijk}^M\looparrowright M.$$
	This is our claimed counterpart in $M$ of the $2$-simplex $\Delta_{ijk}$ of $N$.
	
	We attach the abstract surfaces $S_{ijk}^M$ to $Z^1$ by the obvious attaching map of
	$\partial S_{ijk}^M=e_{ij}^M\cup e_{jk}^M \cup e_{ki}^M$.
	The resulting abstract $2$-complex is our claimed $Z$, coming with a naturally induced immersion:
		$$Z\looparrowright M.$$
		
	\subsubsection{Quasiconvexity and convex core}
	We verify that the immersion $Z\looparrowright M$ induces an embedding
	$\pi_1(Z)\to\pi_1(M)$,
	and describe the topology of the cover of $M$ corresponding to
	(any conjugate of) $\pi_1(Z)$.
	For the latter, we introduce a compact $3$-manifold $\mathcal{Z}$
	as a topological model, which can be constructed as follows:
	
	Note that the geodesic triangulation of $N$ induces a topological handle-decomposition
	of $N$ of which the simplices correspond naturally to the handles.
	Take the union $\mathcal{N}^{(1)}$ of the $0$-handles
	and the $1$-handles, (which is just a regular neighborhood of the $1$-skeleton of $N$).
	The $2$-handles $\Delta_{ijk}\times[0,1]$ are attached to $\partial\mathcal{N}^{(1)}$
	along disjoint annuli $\partial\Delta_{ijk}\times[0,1]$.
	We copy $\mathcal{N}^{(1)}$ to be $\mathcal{Z}^1$ and attach to $\partial\mathcal{Z}^1$
	analogue $2$-handles $S_{ijk}^M\times[0,1]$ along $\partial S_{ijk}^M\times[0,1]$ accordingly.
	Since there is a natural combinatorial identification of $\partial S_{ijk}^M$ with $\partial\Delta_{ijk}$
	by our construction of $S_{ijk}^M$, the attaching map can be defined via the identification.
	The result is an abstract compact $3$-manifold $\mathcal{Z}$ with a spine homeomorphic to
	the $2$-complex $Z$.
	
	Recall that for a Kleinian group $\Gamma$ acting on $\mathbb{H}^3$, the convex hull of $\Gamma$
	is the convex hull of the limit set in $\mathbb{H}^3$, and the convex core of $\mathbb{H}^3/\Gamma$
	is the quotient of the convex hull by the action of $\Gamma$.
	
	\begin{lemma}\label{injectivity}
		The immersion $Z\looparrowright M$ induces an embedding	$\pi_1(Z)\to\pi_1(M)$
		whose image is quasiconvex.
		Moreover, the convex core of
		the cover of $M$ corresponding to $\pi_1(Z)$
		is homeomorphic to the compact $3$-manifold $\mathcal{Z}$.
	\end{lemma}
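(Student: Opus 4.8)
The plan is to follow \cite[Section 4]{Sun2} closely, recording only the modifications forced by the present setup. First I would equip $Z$ with the path metric assembled from the hyperbolic metrics on the fat $(R,\epsilon)$--panted subsurfaces $S_{ijk}^M$, the nearly totally geodesic three-spike crowns of Lemma \ref{noObstruction}(1), and the long geodesic segments $e_{ij}^M$, and I would record the combinatorial-geometric features that make $Z$ ``efficient'': every segment $e_{ij}^M$ has length $\frac{\delta}{L}$--close to $L$ and carries total bending at most $\frac{10\delta}{L}$; the corner angles at the vertices of $Z^0$ are $\frac{10\delta}{L}$--close to the external angles $\theta_{ijk}\in[\phi_0,\pi-\phi_0]$ inherited from $N$; and every panted piece is fat, so each properly embedded essential arc has length at least $R$ under the immersion into $M$. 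These are precisely the quantitative hypotheses used in \cite[Section 4]{Sun2}; the only structural differences are that our ``triangles'' are bounded-but-not-arbitrarily-small and that the junctions occur in the combinatorial pattern of $N^{(1)}$ rather than that of a single closed surface, neither of which affects the estimates.

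Next, for the $\pi_1$--injectivity and quasiconvexity I would reproduce the local-to-global (Morse-type) argument of \cite{Sun2}: passing to the universal cover $\widetilde Z$ and developing it into $\mathbb{H}^3$, one shows that any efficient combinatorial path in $\widetilde Z$ develops to a $(K,C)$--quasigeodesic with constants depending only on $M$, $N$, $\kappa$ and $\phi_0$ (and not on the eventual finite cover). The point is that a nontrivial efficient path alternates between traversing fat panted pieces, where it must descend a definite depth before it can turn back, and traversing long segments $e_{ij}^M$ along which it bends by a negligible amount; and at each vertex of $\widetilde Z$ it turns by an angle bounded away from $0$ and $\pi$ by roughly $\phi_0$. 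Elementary hyperbolic trigonometry then shows such a broken geodesic cannot backtrack and stays within uniformly bounded distance of a genuine geodesic. In particular the developing map $\widetilde Z\to\mathbb{H}^3$ is a quasi-isometric embedding with quasiconvex image, so $\pi_1(Z)\to\pi_1(M)$ is injective with quasiconvex image. I would present this as a direct adaptation of \cite[Section 4]{Sun2}, indicating only the places where the constants $10^{-2}\kappa$, $\phi_0$, $R$, $L$ and $\delta$ enter.

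Finally, for the convex core, let $M_Z=\mathbb{H}^3/\pi_1(Z)$ be the cover corresponding to $\pi_1(Z)$. By the previous step $\pi_1(Z)$ is finitely generated and quasiconvex, hence geometrically finite; since $M$ is closed, $M_Z$ has no cusps, so its convex core $\mathcal{CC}(M_Z)$ is compact; and $M_Z$ is orientable, irreducible and tame. The injectivity of the developing map also makes the immersion $Z\looparrowright M$ lift to an embedding $Z\hookrightarrow M_Z$ that is a homotopy equivalence, and because each $S_{ijk}^M$ was constructed precisely as a panted surface collared by a three-spike crown with boundary combinatorially identified with $\partial\Delta_{ijk}$, a regular neighborhood of $Z$ in $M_Z$ is homeomorphic to the model $\mathcal{Z}$ assembled in the same handle pattern. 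Thus $\mathcal{Z}$ is an embedded compact core of $M_Z$; each component of $M_Z\setminus\operatorname{int}\mathcal{Z}$ is a geometrically finite, cusp-free end, hence a funnel homeomorphic to $\Sigma\times[0,\infty)$ over a component $\Sigma$ of $\partial\mathcal{Z}$; and by uniqueness of compact cores in an irreducible tame $3$--manifold together with the standard identification of the convex core of a geometrically finite manifold with a compact core, $\mathcal{CC}(M_Z)\cong\mathcal{Z}$. This discussion again runs parallel to \cite[Section 4]{Sun2}.

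The main obstacle is the local-to-global quasigeodesic estimate of the second paragraph: establishing, with uniform constants, that efficient paths in $\widetilde Z$ develop to quasigeodesics requires the careful hyperbolic-trigonometric bookkeeping that combines fatness of the panted pieces, the lower bound $\phi_0$ on the corner angles, and the smallness of the bending along the long segments. Since this bookkeeping is essentially identical to \cite[Section 4]{Sun2}, the real work here is only to verify that our geometric design meets the quantitative hypotheses used there. A secondary subtlety, to be handled with care, is matching the framing and attaching data along the edges $e_{ij}^M$ so that the regular neighborhood of $Z$ is genuinely the model $\mathcal{Z}$ rather than merely some compact $3$--manifold with the same spine.
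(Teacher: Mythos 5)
Your treatment of $\pi_1$--injectivity and quasiconvexity is essentially the paper's: both defer to \cite[Section 4]{Sun2}, checking that the quantitative hypotheses there (long segments with bending $O(\delta/L)$, fat panted pieces with no essential arcs shorter than $R$, corner angles pinned in $[\phi_0,\pi-\phi_0]$) are met, with $\phi_0$ replacing the explicit angle bound of \cite{Sun2}. For the ``moreover'' clause, however, you take a genuinely different route, and it has gaps. The paper does not identify the convex core via compact-core theory; following \cite[Theorem 3.8]{Sun2}, it deforms the holonomy from a model representation in which the pieces $S_{ijk}^M$ are genuinely totally geodesic and the convex core is visibly homeomorphic to $\mathcal{Z}$, and then uses stability of this picture under the small deformation. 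That argument delivers the homeomorphism type of the convex core in one stroke, precisely at the points where your argument is thin.

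Concretely, two steps in your third paragraph are asserted rather than proved. First, you deduce that $Z$ lifts to an \emph{embedding} in $M_Z$ from ``injectivity of the developing map,'' but the local-to-global estimate you describe only gives that $\widetilde Z\to\mathbb{H}^3$ is a quasi-isometric embedding; a quasi-isometric embedding need not be injective, and embeddedness of the developed image requires a separate local-injectivity-plus-separation argument (this is real work, essentially part of what the model-deformation argument of \cite{Sun2} buys for free). Second, the claim that a regular neighborhood of $Z$ in $M_Z$ is homeomorphic to $\mathcal{Z}$ does not follow from the combinatorial identification of $\partial S_{ijk}^M$ with $\partial\Delta_{ijk}$ alone: the homeomorphism type of the $2$-handle attachment depends on the isotopy classes of the attaching annuli in the boundary of the handlebody neighborhood of $Z^1$ --- i.e.\ on the cyclic order of the surfaces around each edge and the absence of extra twisting relative to the parallel framing along $e_{ij}^M$. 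Verifying this is exactly where the geometric design (the directions and normal vectors along $e_{ij}^M$ being $\frac{10\delta}{L}$--close to the parallel transports of the frames ${\bold F}_{ijk}^M={\bold i}_1({\bold F}_{ijk})$, and the vertex configurations matching those of $N$ via the fiberwise isometries induced by the trivializations) must be used; you flag it as a ``secondary subtlety'' but leave it unaddressed, and it is not a formality. (A minor additional point: to quote compact-core uniqueness you should also rule out a lower-dimensional convex core, i.e.\ that $\pi_1(Z)$ is not Fuchsian or elementary.) If you repair these two steps, your compact-core route does give an alternative proof of the second statement; as written, the paper's deformation-from-model argument is the one that actually closes them.
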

	
	\begin{proof}
		The proof is almost completely the same as the proof of \cite[Theorem 3.8]{Sun2}.
		To be more specific, our assumptions in the construction of $S_{ijk}^M$ about the normal vectors and
		the absence of short essential arcs ensure that $Z\looparrowright M$ is $\pi_1$--injective
		and quasiconvex. The cover of $M$ corresponding to $\pi_1(Z)$ is isometric to
		the open hyperbolic $3$-manifold $\mathbb{H}^3/\pi_1(Z)$,
		via the induced holonomy representation.
		For sufficiently small $\epsilon$ and sufficiently large $R$ respecting $\epsilon$,
		the representation can be deformed from a model one
		of which the surface pieces $S_{ijk}^M$ are genuinely totally geodesic and the convex core
		is homeomorphic to $\mathcal{Z}$.
		To adapt the proof of \cite[Theorem 3.8]{Sun2}, we only need to be careful about
		a few quantitative estimates related to dihedral angles there.
		The validity of those estimates essentially relied on
		a uniform bound of the dihedral angles away from $0$,
		which was definite and explicitly provided.
		In our case, the uniform bound depends on our initial choice of the geodesic triangulation of $N$,
		and is in fact the constant $\phi_0\in(0,\pi)$.
		After modifying the estimates accordingly,
		we can repeat the proof of \cite[Theorem 3.8]{Sun2} (see \cite[Section 4]{Sun2} for the argument and discussions)
		for our case with only notational changes.
	\end{proof}
	
	We close the subsection by pointing out the range of the parameters $(\delta,L)$ and $\epsilon$ that we need
	for the above construction.
	After fixing a choice for the set of initial data,
	we require that a constant $0<\epsilon<1$ to be chosen
	less than $\mathrm{inj}(M)$ and $10^{-1}\phi_0$, and take $\delta$ to be $10^{-1}\epsilon$.
	The sufficiently large constant $L>1$ can be chosen,
	according to $\delta$ and the geometry of $M$ and our initial data,
	so that the application of the good pants constructions
	(Theorems \ref{bounding} and \ref{connectionprinciple}) are valid.
	Note that, overall, we have only invoked
	Theorems \ref{bounding} and \ref{connectionprinciple} for a definite finite number of times,
	which is determined by the triangulation of $N$.

\subsection{Virtual embedding and 1-domination}
	Provided with the immersed quasiconvex $2$-complex $Z\looparrowright M$, we denote by
		$$\tilde{M}_Z\to M$$
	the cover of $M$ corresponding to (a conjugate of) the subgroup $\pi_1(Z)$ of $\pi_1(M)$,
	as guaranteed by Lemma \ref{injectivity}.
	The convex core of $\tilde{M}_Z$ is an embedded convex $3$-submanifold
	which is homeomorphic to $\mathcal{Z}$ as described by Lemma \ref{injectivity}.
	Because of the quasiconvexity of $\pi_1(Z)$ and convexity of the hyperbolic metric,
	the $r$-neighborhood of the convex core of $\tilde{M}_Z$ for any radius $r$,
	denoted by
		$$K_r\subset\tilde{M}_Z,$$
	is again convex and homeomorphic to $\mathcal{Z}$.
		
	By the LERF property of finite-volome hyperbolic $3$-manifold groups,
	proved by I.~Agol \cite{Ag} based on the work of D.~Wise \cite{Wi},
	$\pi_1(Z)$ is a separable subgroup of $\pi_1(M)$.
	Using Scott's topological characterization of the LERF property \cite{Sc},
	for any given $r$,
	the infinite cover $\tilde{M}_Z$ factors through a finite cover $M'$ of $M$, namely,
		$$\tilde{M}_Z\to M'\to M,$$
	such that the projection induces an embedding of $K_{r+2}$ into $M'$.
	For our application, when a positive constant $\kappa$ is given,
	it is safe enough to produce the finite cover $M'$
	taking $r$ to be $\kappa^{-1}(1+\mathrm{Diam}(N))$,
	or simply $(1+\mathrm{Diam}(N))$ if $\kappa$ exceeds $1$.
		
	\begin{lemma}\label{one_domination}
		There exists a degree-$1$	map $f\colon M'\rightarrow N$.
		When a positive constant $\kappa$ is given and the initial data is prepared accordingly,
		$f$ can be required to be $\kappa$-Lipschitz.
	\end{lemma}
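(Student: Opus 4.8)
The plan is to build $f$ on the embedded copy of $K_r\cong\mathcal{Z}$ inside $M'$ by mapping it onto a regular neighborhood of the $2$-skeleton $N^{(2)}$ of $N$, degree-$1$ and compatibly with the handle decompositions, and then to extend $f$ over the complement of $K_r$ in $M'$ by a map that is nullhomotopic into $N$ in a controlled way, so that the total degree remains $1$. First I would set up the target side: the geometric triangulation of $N$ gives a handle decomposition, with $\mathcal{N}^{(1)}$ the union of $0$- and $1$-handles and $2$-handles $\Delta_{ijk}\times[0,1]$ attached along $\partial\Delta_{ijk}\times[0,1]$; let $\mathcal{N}^{(2)}$ denote the union of $\mathcal{N}^{(1)}$ with all $2$-handles, a regular neighborhood of $N^{(2)}$, whose complement in $N$ is the disjoint union of the (open) $3$-handles, i.e. a collection of open balls. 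On the domain side, the embedded $K_r\subset M'$ is homeomorphic to $\mathcal{Z}$, which by Lemma \ref{injectivity} is assembled from the copy $\mathcal{Z}^1$ of $\mathcal{N}^{(1)}$ together with analogue $2$-handles $S_{ijk}^M\times[0,1]$ glued along $\partial S_{ijk}^M\times[0,1]$ via the combinatorial identification $\partial S_{ijk}^M\cong\partial\Delta_{ijk}$.

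Next I would construct $f|_{K_r}\colon K_r\to\mathcal{N}^{(2)}\subset N$ handle by handle. On $\mathcal{Z}^1$ use the obvious homeomorphism onto $\mathcal{N}^{(1)}$ coming from the embedding $i_1\colon N^{(1)}\to M$ and our wiring construction; in fact the edges $e_{ij}^M$ were built to shadow the edges $e_{ij}$ of $N$, so collapsing each long geodesic arc $e_{ij}^M$ back onto $e_{ij}$ (linearly in arclength) extends over the $0$- and $1$-handles. On each analogue $2$-handle $S_{ijk}^M\times[0,1]$, map it onto the $2$-handle $\Delta_{ijk}\times[0,1]$ by first collapsing the surface-with-boundary $S_{ijk}^M$ onto the $2$-disk $\Delta_{ijk}$ rel the boundary identification $\partial S_{ijk}^M\cong\partial\Delta_{ijk}$ (possible since $\Delta_{ijk}$ is a disk and any compact orientable surface with connected boundary admits a degree-$1$ map to the disk rel boundary), and then taking the product with $[0,1]$. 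These pieces agree on overlaps because the attaching data match combinatorially, so they assemble to a map $f|_{K_r}$ which is a degree-$1$ map of pairs $(\mathcal{Z},\partial\mathcal{Z})\to(\mathcal{N}^{(2)},\partial\mathcal{N}^{(2)})$; in particular it sends $\partial K_r$ into $\partial\mathcal{N}^{(2)}$, which is a union of spheres bounding the $3$-handle balls of $N$.

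Then I would extend $f$ over $M'\setminus \mathrm{int}(K_r)$. The complement $M'\setminus\mathrm{int}(K_r)$ is a compact $3$-manifold with boundary $\partial K_r$ (a union of closed surfaces), and on $\partial K_r$ the map $f$ already lands in $\partial\mathcal{N}^{(2)}=\bigsqcup S^2$. Since each component of $\partial\mathcal{N}^{(2)}$ bounds a ball (a $3$-handle) in $N$, and $f|_{\partial K_r}$ into that sphere extends over the corresponding ball, it suffices to extend $f$ over $M'\setminus\mathrm{int}(K_r)$ as a map into $\bigsqcup\overline{B^3}\subset N$; because $\overline{B^3}$ is contractible this is just a matter of extending a map into contractible targets, which is unobstructed once one checks that each boundary component of $M'\setminus\mathrm{int}(K_r)$ is sent into a single $B^3$-component — which holds since $\partial K_r$ is connected through the convexity/homeomorphism-with-$\mathcal{Z}$ statement and $f|_{\partial K_r}$ factors through one sphere. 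The resulting $f\colon M'\to N$ has, by additivity of degree over the handle decomposition, $\deg f=1$: each $3$-handle of $N$ is hit once (with the correct sign) by the interior of the corresponding domain region, and the $0$-, $1$-, $2$-handle contributions cancel as they map to lower strata. Finally, for the $\kappa$-Lipschitz refinement, recall that the triangulation of $N$ was chosen with edge lengths at most $10^{-2}\kappa$, so $\mathcal{N}^{(2)}$ and each $3$-handle ball have diameter $\ll\kappa$; with $r$ chosen as $\kappa^{-1}(1+\mathrm{Diam}(N))$ the collapsing maps on the long geodesic arcs $e_{ij}^M$ (of length $\approx L$) contract by a factor $O(\kappa/L)<\kappa$, and the extension over $M'\setminus\mathrm{int}(K_r)$ can be taken constant outside a collar and hence Lipschitz with the same small constant; smoothing if necessary gives a genuinely $\kappa$-Lipschitz representative in the same homotopy class, hence still of degree $1$.

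\textbf{Main obstacle.} The delicate point is bookkeeping the \emph{sign} of the degree so that it is exactly $+1$ and not $-1$ or $0$: one must orient $\mathcal{Z}$, $K_r$, $\mathcal{N}^{(2)}$, and $N$ coherently and verify that the collapsing maps $S_{ijk}^M\to\Delta_{ijk}$ and $e_{ij}^M\to e_{ij}$ are orientation-compatible with the identifications built into the construction of $Z$ and $\mathcal{Z}$; equivalently, that the framing/normal-vector data recorded in Lemmas \ref{noObstruction} and \ref{injectivity} are consistent with the ambient orientations of $M$ and $N$. Getting a $\kappa$-Lipschitz representative is then routine, but the orientation/degree computation is where care is genuinely needed.
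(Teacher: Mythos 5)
Your overall strategy coincides with the paper's: identify the convex neighborhood $K$ with $\mathcal{Z}$, map it properly and with degree $1$ onto $\mathcal{N}^{(2)}$ by pinching each $S_{ijk}^M\times[0,1]$ to $\Delta_{ijk}\times[0,1]$, extend over the complement using the fact that the complementary pieces of $\mathcal{N}^{(2)}$ in $N$ are the $3$-handles, and obtain the Lipschitz bound from the $(10^{-2}\kappa)$-short edges and the choice $r\approx\kappa^{-1}(1+\mathrm{Diam}(N))$. The genuine gap is in your extension step. You extend $f$ over $M'\setminus\mathrm{int}(K_r)$ \emph{into} $\bigsqcup\overline{B^3}$, justifying this by checking that each boundary component of the complement is sent into a single sphere of $\partial\mathcal{N}^{(2)}$. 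That check is automatic (each boundary component is connected) and is not the condition you need: what you need is that each \emph{connected component} $W_0$ of $M'\setminus\mathrm{int}(K_r)$ has \emph{all} of its boundary components mapped to the boundary sphere of one and the same $3$-handle; otherwise the connected set $W_0$ cannot map into the disjoint union of closed balls with the prescribed boundary values. There is no reason for this to hold: $\partial K_r\cong\partial\mathcal{Z}$ is in general disconnected (one component for each tetrahedron of $N$), and a complementary component of the embedded $K_r$ may meet several of these components, whose images lie on spheres of different $3$-handles. Your appeal to ``$\partial K_r$ is connected'' is both false in general and beside the point.

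The repair is exactly what the paper does: do not require the extension to stay inside the balls. First map a collar $\partial K_1\times[1,2]$ onto the $3$-handles so that $\partial K_2$ lands on the handle centers; then collapse every component of $M'\setminus K_{r+2}$ to an arbitrary point of $N$, and map each component $\Sigma\times[2,r+2]$ of $K_{r+2}\setminus K_2$ through a path in $N$ joining the two relevant points, collapsing each slice $\Sigma\times\{\ast\}$. This extension is unobstructed no matter how the complementary components meet $\partial K_r$, and since the image of everything beyond $K_2$ is $1$-dimensional it misses a generic regular value, so the degree is still $1$ by the proper degree-$1$ property of the map on $K_1$ and the collar map onto the handles. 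This is also where the wide collar is genuinely used for the metric statement: the connecting paths can have length up to $\mathrm{Diam}(N)$, and traversing them $\kappa$-Lipschitzly is what forces $r\geq\kappa^{-1}(1+\mathrm{Diam}(N))$; in your ball-confined version the large $r$ would have played no role, which is itself a sign that the confinement cannot be arranged. The remainder of your argument (the handle-by-handle degree-$1$ map on $K$, the orientation bookkeeping, the Lipschitz estimate along the spine) matches the paper's proof.
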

	
	\begin{proof}
		Topologically, $f$ can be constructed,
		with respect to any fixed constant $r>0$, as follows.
		With the notations in the definition of $\mathcal{Z}$,
		observe the following proper degree-$1$ map
		$\mathcal{Z}\to \mathcal{N}^{(2)}$:
		The restricted map $\mathcal{Z}^{(1)}\to\mathcal{N}^{(1)}$
		is the identifying homeomorphism, and the restricted maps
		$S_{ijk}^M\times[0,1]\to \Delta_{ijk}\times[0,1]$
		are induced by pinching the first factor.
		Identifying $K_1$ with $\mathcal{Z}$ via a homeomorphism,
		we obtain a proper degree-$1$ map
			$$f_1\colon K_1\to \mathcal{N}^{(2)},$$
        Since the complementary components of $\mathcal{N}^{(2)}$ are the $3$-handles of $N$,
		the defined map $f_1$ can be extended over every complementary component of $K_1$ in $M'$.
		More precisely, we first require every component of
		$K_{2}\setminus K_1\cong \partial K_1\times [1,2]$
		to be mapped onto a $3$-handle $D_i\cong S^2\times[0,1]\,/\,S^2\times\{1\}$ of $N$,
		as induced by $f_1|_{\partial K_1}$. So we get a map $$f_2\colon K_2\to N,$$ which maps
        $\partial K_{2}$ to centers of $3$-handles of $N$.
		Then we require every component of $M'\setminus K_{r+2}$
		to be collapsed to an arbitrary point in $N$, and every component $\Sigma\times [2,r+2]$ of
        $K_{r+2}\setminus K_2\cong \partial K_2\times [2,r+2]$ to be mapped to a path in $N$ from the point
        corresponding to the image of $\Sigma\times \{2\}$ to the point corresponding to the image of $\Sigma\times \{r+2\}$,
        via a map collapsing each $\Sigma\times\{*\}$ to a point.
		The resulting map
			$$f\colon M'\to N$$
		is hence a degree-$1$ map as claimed.
		
		When a positive constant $\kappa$ is given and the initial data is prepared accordingly,
		the above map $f$ can be metricized to be a $\kappa$-Lipschitz one.
		In fact, we may assume without loss of generality that $\kappa$ is at most $1$,
		otherwise replace it with $1$.
		Fix a constant $r>\kappa^{-1}(1+\mathrm{Diam}(N))$, which is at least $1$.	
		The geodesic triangulation of $N$ has all the edges $(10^{-2}\kappa)$--short, whereas
		$K_1$ has a nearly totally geodesic spine, homeomorphic to our $2$-complex $Z$.
		The segments of the spine are lifts $e_{ij}^M$, so they all have length at least $1$;
		the $2$-pieces are lifts of $S_{ijk}^M$, so they have no essential arcs shorter than $1$.
		Moreover, we can ensure that $K_1$ contains the $1$-neighborhood of the spine.
		With the picture in mind, the map $f_1$ can be constructed to be $\kappa$-Lipschitz,
		mapping $K_1$ properly onto the $(10^{-3}\kappa)$--neighborhood of the $2$-skeleton of $N$.
        Since the $3$-handles in $N$ have diameter at most $10^{-2}\kappa$, the map $f_2\colon K_2\to N$
        is $\kappa$-Lipschitz on $K_2\setminus K_1$. Since $r=\kappa^{-1}(1+\mathrm{Diam}(N))$,
        and we can assume the paths of the image of $\partial K_2\times [2,r+2]$ have length at most $\mathrm{Diam}(N)$,
		the described extension of $f_2$ can also be realized to satisfy the $\kappa$-Lipschitz condition
		in every complementary component of $K_2$ in $M'$.
		By the construction of $M'$ (or the choice of $r$),
		it follows easily that the map $f$ constructed above is globally $\kappa$--Lipschitz.
		We have omitted some estimation details in the course,
		but those should be conceptually clear and straightforward to check.
	\end{proof}
	
	This completes the proof of Theorem \ref{virtual_one_domination_hyperbolic}.

	\subsection{Further remark}
	A closer examination of the proof of Theorem \ref{virtual_one_domination_hyperbolic}
	reveals that the essential information provided by the geometric triangulation
	is a compatible assignment of dihedral angles to the tetrahedra.
	To be precise,
	for any closed orientable $3$-manifold $N$ with a simplicial triangulation,
	the same construction works
	if there exists an assignment of dihedral angles to every tetrahedron of $N$
	subject to the following requirement:
	For each link of a vertex, the induced angle assignment to the $2$-simplices can be realized
	by a spherical geometric structure of that vertex link.

\section{Virtual 1-domination onto arbitrary 3-manifolds}\label{Sec-generalCase}

	In this section, we derive the main results of the paper, namely,
	Theorems \ref{main} and \ref{contraction}, from Theorem \ref{virtual_one_domination_hyperbolic}.
	
	To derive the topological virtual $1$-domination, Theorem \ref{main},
	we invoke the fact that every closed oriented $3$-manifold
	is $1$-dominated by a hyperbolic closed oriented $3$-manifold, see Boileau--Wang \cite[Proposition 3.3]{BW}.
	In particular, given any $M$ hyperbolic and $N$ arbitrary as assumed by Theorem \ref{main},
	we may take a hyperbolic $N'$ which $1$-dominates $N$ via a map $N'\to N$,
	and construct by Theorem \ref{virtual_one_domination_hyperbolic}
	a virtual $1$-domination $M'\to N'$ using some finite cover of $M'$.
	The composed map $M'\to N'\to N$ has degree $1$, so $M$ virtually $1$-dominates $N$.
	This completes the proof of Theorem \ref{main}.
	
	To derive the metric-contracting virtual $1$-domination
	suppose that $M$ is hyperbolic and
	$N$ is Riemannian as assumed by Theorem \ref{contraction}.
	By invoking \cite[Proposition 3.3]{BW}, again we find a hyperbolic $N'$ which $1$-dominates $N$.
	We take a Lipschitz degree-$1$ map $N'\to N$, whose Lipschitz constant is denoted by $\lambda>0$.
	When any constant $\kappa>0$ is given, we apply Theorem \ref{virtual_one_domination_hyperbolic}
	with respect to the constant $\lambda^{-1}\kappa$, obtaining a $(\lambda^{-1}\kappa)$--Lipschitz
	virtual $1$-domination $M'\to N'$.
	The composed map $M'\to N'\to N$ is hence $\kappa$-Lipschitz and degree-$1$ as claimed.
	This completes the proof of Theorem \ref{contraction}.


\begin{thebibliography}{ZZZZ}

\bibitem[Ag]{Ag} {I.~Agol},
{\it The virtual Haken conjecture}, with an appendix by I.~Agol, D.~Groves, J.~Manning, Documenta Math.~\textbf{18} (2013), 1045 - 1087.

\bibitem[BW]{BW} {M.~Bolieau, S.~Wang}, {\it Non-zero degree maps and surface bundles over $S^1$}, J.~Differential Geom. 43 (1996), no. 4, 789 - 806.

% \bibitem[Ga]{Ga} {A.~Gaifullin}, {\it Universal realisators for homology classes}, Geom. Topol. 17 (2013), no. 3, 1745 - 1772.

\bibitem[Ha]{Ha} {U.~Hamenst\"{a}dt}, {\it Incompressible surfaces in rank one locally symmetric spaces}, Geom.~Funct.~Anal. 25 (2015), no. 3, 815 - 859.

\bibitem[KM]{KM} {J.~Kahn,  V.~Markovic},  {\it Immersing almost geodesic surfaces in a closed hyperbolic three manifold},
Ann.~of Math. (2) 175 (2012), no. 3, 1127 - 1190.

\bibitem[LL]{LL} {G.~Lakeland, C.~Leininger}, {\it Strict contractions and exotic $SO_0(d,1)$ quotients}, preprint, arxiv:math.GT/1606.02609.

\bibitem[Liu]{Liu} {Y.~ Liu}, {\it Immersing Quasi-Fuchsian surfaces of odd Euler characteristic in closed hyperbolic 3-manifolds}, preprint, arxiv:math.GT/1603.07069v2.

\bibitem[LM]{LM} {Y.~ Liu, V.~ Markovic},  {\it Homology of curves and surfaces in closed hyperbolic 3-manifolds},
Duke Math.~J.  164, No. 14 (2015), 2723 - 2808.

\bibitem[Sc]{Sc} {P.~Scott}, {\it Subgroups of surface groups are almost geometric}, J.~London Math.~Soc. (2) 17 (1978), no. 3, 555 - 565.

\bibitem[Sun1]{Sun1} {H.~Sun}, {\it Virtual homological torsion of closed hyperbolic $3$-manifolds}, J.~Differential Geom. 100 (2015), no. 3, 547 - 583.

\bibitem[Sun2]{Sun2} {H.~Sun},  {\it Virtual Domination of $3$-manifolds},  Geometry \& Topology 19 (2015) 2277 - 2328.

% \bibitem[Th1]{Th1} {W.~P.~Thurston}, {\it The Geometry and Topology of Three-Manifolds}, Lecture Notes, Princeton, 1977.

% \bibitem[Th2]{Th2} \bysame, {\it Three dimensional manifolds, Kleinian groups and hyperbolic geometry}, Bull.~Amer.~Math.~Soc.~\textbf{316}, 1982, 357 - 381.

%\bibitem[Wi]{Wi} {D.~Wise}, {\it The structure of groups with a quasiconvex hierarchy}, preprint, available at http://www.math.mcgill.ca/wise/papers.html.
\bibitem[Wi]{Wi} D.~T.~Wise, \textit{From Riches to Raags: 3-Manifolds, Right-Angled Artin Groups, and
Cubical Geometry}, CBMS Regional Conference Series in Mathematics, vol. 117, Washington, DC, 2012.


\end{thebibliography}
\end{document}